\newcommand{\field}[1]{\mathbb{#1}}
\newcommand{\Z}{\field{Z}}
\newcommand{\HHH}{\mathcal{H}}
\newcommand{\R}{\field{R}}
\newcommand{\C}{\field{C}}
\newcommand{\g}{\gamma}
\newcommand{\G}{\Gamma}
\newcommand{\HH}{\mathfrak{H}}
\def\({\left(}
\def\){\right)}
\newcommand{\nbd}{\nobreakdash-\hspace{0pt}}
\def\ca{{\mathfrak a}}
\def\cb{{\mathfrak b}}
\theoremstyle{plain}
\newtheorem{theorem}{Theorem}
\newtheorem*{theorem*}{Theorem}
\newtheorem{lemma}[theorem]{\textbf{Lemma}}
\newtheorem{proposition}[theorem]{\textbf{Proposition}}
\newtheorem{corollary}[theorem]{\textbf{Corollary}}
\newtheorem*{conjecture*}{Conjecture}
\theoremstyle{definition}
\newtheorem{definition}[theorem]{Definition}
\theoremstyle{remark}
\newtheorem{remark}{Remark}
\numberwithin{theorem}{section} \numberwithin{equation}{section}
\renewenvironment{proof}[1][Proof]{\begin{trivlist}
\item[\hskip \labelsep {\bfseries #1:}]}{\qed\end{trivlist}}
\begin{document}
\newcommand{\Q}{{\mathbb Q}}
\newcommand{\sgn}{\mathrm{sgn}}

\title{Mock period functions, sesquiharmonic Maass forms, and non-critical
values of $L$-functions}
\author{Kathrin Bringmann}
\address{Mathematical Institute, University of Cologne, Weyertal 86-90, 50931 Cologne, Germany}
\email{kbringma@math.uni-koeln.de}
\author{Nikolaos Diamantis}
\address{School of Mathematical Science, University of Nottingham,
Nottingham NG7 2RD, UK \\
Max-Planck-Institut f\"ur Mathematik,
Vivatsgasse 7,
53111 Bonn,
Germany}
\email{diamant@mpim-bonn.mpg.de}
\author{Martin Raum}
\address{Max-Planck-Institut f\"ur Mathematik,
Vivatsgasse 7,
53111 Bonn,
Germany}
\email{MRaum@mpim-bonn.mpg.de}
\thanks{The research of the first author was supported by the Alfried 
Krupp Prize for Young University Teachers of the Krupp Foundation. The
second author is supported by the Max-Planck Institut for Mathematics, 
Bonn.
The third author holds a scholarship from the Max-Planck Society of 
Germany.}

\date{}

\begin{abstract} We introduce a new technique of
completion for $1$\nobreakdash-\hspace{1pt}cohomology which parallels the
corresponding technique in the
theory of mock modular forms. This technique is applied in the
context of non-critical values of $L$-functions of $\mathrm{GL}(2)$ cusp forms.
We prove that a generating series of non\nobreakdash-\hspace{0em}critical
values can be
interpreted as a mock period function we define in analogy with period
polynomials. Further,
we prove that non-critical values can be encoded into a sesquiharmonic Maass
form. Finally, we formulate and
prove an Eichler-Shimura-type isomorphism for the space of mock period
functions.
\end{abstract}

\subjclass{11F67, 11F03}

\maketitle

\section{Introduction}
\noindent
In this work, we establish a connection between two seemingly
disparate topics and techniques: mock modular forms (holomorphic parts
of harmonic Maass forms) and
non-critical
values of $L$-functions of cusp forms. To describe this connection, we
first outline each of these topics and some of the corresponding questions
that arise.

A very fruitful technique that has recently emerged in the broader area
of
automorphic forms and its arithmetic applications is based on
``completing"
a holomorphic but not quite automorphic form into a harmonic Maass
form by addition of a suitable non-holomorphic function. This method
originates in its modern form in Zwegers' PhD thesis  \cite{Zw}.
  Zwegers completed all of Ramanujan's mock theta
functions introduced by Ramanujan in
his famous last letter to Hardy \cite{Ra}, including
\begin{equation*}
f(q):=1+\sum_{n=1}^{\infty}\frac{q^{n^2}}{(1+q)^2(1+q^2)^2\cdots (1+q^n)^2}.
\end{equation*}
To be more precise, Zwegers found a (purely) non-holomorphic
function
\begin{equation} \label{nonholint}
N_f(z):= \int_{- \overline{z}}^{i\infty} \frac{\Theta_f(w)}{\sqrt{z+w}}dw,
\end{equation}
where $\Theta_f$ is some explicit weight $\frac32$ cuspidal theta
function, so
that $$f(q)+N_f(z)$$
transforms like an automorphic form of weight ``dual"
to that of $f$, i.e., of weight $\frac12$ in our case (throughout  we write $q:=e^{2 \pi i z}$).
Such  completions proved to be useful in obtaining information for
the original function ($f$ in our context), including exact
formulas for Fourier coefficients, made use of, e.g., in the proof in \cite{BO1} of the
Andrews-Dragonette Conjecture \cite{An,Dr}.
On the other hand, one can also reverse
 the question and start with a
modular form,  define
an integral $N$ resembling the one in (\ref{nonholint}) and find a
holomorphic function $F$ such that
$N+F$ transforms like a modular form. Such
``lifts" were constructed for cusp forms of weight $\frac12$ in terms of
combinatorial series by the first author, Folsom, and Ono \cite{BFO} and
by the first
 author and Ono for general cusp forms \cite{BO4}. Recently, also lifts for
non-cusp forms were found \cite{DIT}.
Obstructions to modularity occuring from functions like $f$
may also be viewed in terms of critical values of $L$-functions
\cite{BGKO} in a way
we will describe later.

We next introduce the second topic, non-critical values of
$L$-functions. We will first outline the background concerning general
values of $L$-functions and critical values. Let
$f$ be an element of $S_k$, the space of cusp forms of weight $k \in
2\mathbb N$ for $\mathrm{SL}_2(\mathbb Z)$,
and
let $L_f(s)$ denote its $L$\nbd function. Special values of $L$-functions
have
been the focus of intense research in arithmetic algebraic geometry and
analytic number theory, because they provide deep insight to $f$ and
associated arithmetic
and geometric objects. Several of the outstanding conjectures in number
theory are related to special values of $L$-functions, e.g. the
ones posed by
Birch-Swinnerton-Dyer, Beilinson and Bloch-Kato
(see, for example, \cite{KZ}).  In particular, they are commonly interpreted as regulators in $K$-theory \cite{Sch88}.

Among the special values, more is known about the \textit{critical} values
which, for our purposes, are $L_f(1), L_f(2), \dots ,L_f(k-1)$
(see \cite{De, KZ} for an intrinsic characterization). For instance,
Manin's Periods Theorem \cite{M} implies that, when $f$ is an eigenform of
the Hecke operators, its critical values are algebraic linear combinations
of two constants depending only on $f$. This result was established by
incorporating a ``generating function" of the critical values into a
cohomology which has a rational  structure.
The generating
function is the \textit{period polynomial}
$$
r_f(X):=\int_0^{i\infty} f(w)(w-X)^{k-2} dw
\text{,}
$$
and each of its coefficients is an explicit multiple of a critical values
of $L_f(s)$ (see Lemma \ref{r_f} for the precise statement).

The period polynomial of $f$ satisfies the \textit{Eichler-Shimura
relations}:
$$
r_f|_{2-k}(1+S)=r_f\Big|_{2-k}\left(1+
U+U^2\right)=0 \qquad \text{
with
$S:=\left(\begin{smallmatrix} 0 & -1\\ 1 & 0\end{smallmatrix}\right)$,
$U:=\left(\begin{smallmatrix} 1 & -1\\ 1 & 0\end{smallmatrix}\right)$}
$$
in terms of the action $|_m$ on $G: \HH \to \C$ defined for each $m \in 2
\mathbb Z$ by
\begin{equation*}\label{|}
G|_m\g(X):=G(\g X) (cX+d)^{-m} \qquad \text{for $\g=\left (
\begin{smallmatrix} * & * \\
c & d \end{smallmatrix}\right ) \in \mathrm{SL}_2(\R)$.}
\end{equation*}
Because of the importance of these Eichler-Shimura relations, the
space $V_{k-2}$ of all polynomials of degree at most $k-2$ satisfying
them
has been
studied independently. It is called the \textit{space of period polynomials}
and is denoted by $W_{k-2}$.

Non-critical values are much less understood and there are
even some ``negative" results such as that of Koblitz \cite{Ko}, asserting
that, in a strong sense, there can not be a Period Theorem for
non-critical values. In any case, it is generally expected that the
algebraic
structure of such values is more complicated than that of critical values.
Nevertheless, in \cite{CD} it is shown that it is possible to
define ``generating series"  of non-critical values, which can further be
incorporated into a cohomology similar  to the Eichler
cohomology.  This fits into the philosophy of Manin's
\cite{M2} and Goldfeld's \cite{Go} cohomological interpretation of
values and derivatives of $L$-functions, respectively.
The generating series is a function $r_{f, 2}$ on the
Poincar\'e upper-half plane $\HH$ given by
$$
r_{f, 2}(z):=\int_0^{i\infty} \frac{F_f(w)}{(wz-1)^k} dw,
$$
where $F_f$ is the \textit{Eichler integral}
associated to $f$
$$
F_f(z):=\int_{z}^{i\infty} f(w)(w-z)^{k-2} dw.
$$
The function $r_{f, 2}$ is the direct counterpart of the period
polynomial $r_f$ associated to critical values.
The non-critical values are obtained  from $r_{f, 2}$  as ``Taylor
coefficients" of
$r_{f, 2}$ (see Lemma \ref{r_{f,2}}),  just as critical values are
retrieved as coefficients of the period polynomial $r_f$.  The
ambient space of functions consists of  harmonic functions rather
than
polynomials and the action is $|_k$ instead of $|_{2-k}.$

The \textit{first} link between the aforementioned two topics emerges as we
use techniques from the theory of mock modular forms to
intrinsically
interpret the constructions that were associated to
non-critical values in \cite{CD}. Those constructions
were in some respects ad hoc and
not as intrinsic as those relating to critical values.
 For example,  whereas the period polynomial is
expressed as a constant multiple of
$$
F_f|_{2-k}(S-1),
$$
the generating function $r_{f, 2}(z)$ has an analogous expression
only up
to an explicit ``correction term".  That problem would seem to be
insurmountable, because $r_{f, 2}(z)$ is not invariant under $S$.

However, in this paper we show that it is
exactly thanks to
 the ``correction term" that our generating
function $r_{f, 2}$ can be completed into a function which belongs
to a natural  analogue of  the space of period polynomials $W_{k-2}$.
We
show that an appropriate counterpart of
 $$
W_{k-2}:=\{P
\in V_{k-2}; P|_{2-k}(1+S)=P|_{2-k}\left(1+U+U^2\right)=0\}$$
\rm is
$$
W_{k, 2}:=
\left\{\mathcal{P}: \mathfrak H \to \C;\,
\xi_k(\mathcal{P}) \in V_{k-2};  \,
\mathcal{P}|_k(1+S)=\mathcal{P}|_k\left(1+U+U^2\right)=0\right\}.
$$
Here, $\xi_{k}$ is a key operator in the theory of mock modular
forms defined, for $y:=$Im$(z)$ by
\[
\xi_{k}:=2iy^{k}\overline{\frac{d}{d\overline{z}}}.
\]
Our first main result then is
\begin{theorem}\label{W_k,2intro}
Let $k \in 2 \mathbb N$ and
$f$  a weight $k$ cusp form.  Then the function
\begin{equation*}\label{rf2}
\widehat r_{f, 2}(z):=r_{f, 2}(z)-
 \int_{-\overline z}^{i \infty} \frac{r_f(w)}{(w+z)^k}dw
\end{equation*}
belongs to  the space  $W_{k, 2}$.
\end{theorem}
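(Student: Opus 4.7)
We verify the three defining conditions of $W_{k,2}$ for $\widehat r_{f,2} = r_{f,2} - C$, where
$$C(z) := \int_{-\overline z}^{i\infty}\frac{r_f(w)}{(w+z)^k}\,dw.$$

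For the $\xi_k$-condition: $r_{f,2}$ is holomorphic in $z$ and hence killed by $\xi_k$. The only $\overline z$-dependence of $C$ lies in its lower endpoint $-\overline z$, so by the Leibniz rule $\partial_{\overline z}C(z) = r_f(-\overline z)/(z-\overline z)^k$. Since $k$ is even, $\overline{(2iy)^k} = (2iy)^k$, and thus $\xi_k C(z) = (2i)^{1-k}\overline{r_f(-\overline z)}$, which is a polynomial in $z$ of degree at most $k-2$ as $r_f \in V_{k-2}$. Hence $\xi_k\widehat r_{f,2} = -\xi_k C \in V_{k-2}$.

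For the $S$-condition: the strategy is to establish
$$r_{f,2}|_k(1+S)(z) \;=\; C|_k(1+S)(z) \;=\; \int_0^{i\infty}\frac{r_f(w)}{(w+z)^k}\,dw,$$
so that $\widehat r_{f,2}|_k(1+S)=0$ follows by subtraction. A direct unfolding (using $k$ even) gives $r_{f,2}|_kS(z) = \int_0^{i\infty}F_f(w)(w+z)^{-k}dw$. The substitution $w \mapsto -1/w$ in the defining integral of $r_{f,2}(z)$, combined with the Eichler-integral identity $F_f|_{2-k}(S-1) = -r_f$ (immediate from splitting $r_f = \int_0^z+\int_z^{i\infty}$), rewrites $r_{f,2}(z)$ as $\int_0^{i\infty}(r_f(w) - F_f(w))(w+z)^{-k}dw$; adding yields the first equality. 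For the second, the same substitution applied to $C|_kS(z) = \int_{1/\overline z}^{i\infty}r_f(w)(wz-1)^{-k}dw$, now using the classical Eichler--Shimura relation $r_f|_{2-k}(1+S)=0$ (i.e.\ $r_f(-1/u) = -u^{2-k}r_f(u)$), produces $C|_kS(z) = \int_0^{-\overline z}r_f(w)(w+z)^{-k}dw$, which concatenates with $C(z)$ to extend the path back to $[0,i\infty]$.

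For the $U$-condition: the same template applies with $S$ replaced by the order-three element and its square. Writing $U = TS$ and exploiting the $T$-periodicity of $f$, together with the classical Eichler--Shimura relation $r_f|_{2-k}(1+U+U^2) = 0$, the substitutions $w \mapsto U^{-j}w$ for $j=0,1,2$ transform both $r_{f,2}|_k(1+U+U^2)$ and $C|_k(1+U+U^2)$ into the same sum of line integrals of $r_f(w)(w+z)^{-k}$ along the $U$-orbit of the imaginary axis. These three pieces reassemble into a closed loop and cancel. The principal technical obstacle is purely bookkeeping: carefully tracking the three orientations and the three endpoints so that the boundary contributions of $C|_k(1+U+U^2)$ (coming from the non-holomorphic lower limit $-\overline z$) match those produced by transporting $r_{f,2}|_k(1+U+U^2)$ through the chain $U^{-j}$. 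No algebraic input beyond the Eichler--Shimura relations for $F_f$ and $r_f$ is required.
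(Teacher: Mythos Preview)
Your proof is correct, but the paper takes a substantially shorter and more conceptual route. Rather than manipulating $r_{f,2}$ and the correction $C$ separately, the paper invokes Proposition~\ref{SuperM}, which says $\widehat r_{f,2} = F_{f,2}|_k(S-1)$ for the $T$-invariant function $F_{f,2}(z) := \int_{-\overline z}^{i\infty} F_f(w)(w+z)^{-k}\,dw$. The Eichler--Shimura relations then follow purely from group theory: $(S-1)(1+S) = S^2-1 = 0$ handles the $S$-relation, while $T$-invariance gives $F_{f,2}|_k(1-S) = F_{f,2}|_k(1-TS) = F_{f,2}|_k(1-U)$, and $(1-U)(1+U+U^2) = 1-U^3 = 0$ handles the $U$-relation. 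No further integral substitutions are needed. Your approach, by contrast, bypasses $F_{f,2}$ entirely and shows directly that each slash-sum acts identically on $r_{f,2}$ and on $C$; this is longer but self-contained, and it incidentally reproduces the explicit formulas that the paper records separately as Theorem~\ref{mockperiod}.

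One caution on your $U$-argument: the phrase ``reassemble into a closed loop and cancel'' is misleading. Neither $r_{f,2}|_k(1+U+U^2)$ nor $C|_k(1+U+U^2)$ vanishes individually---Theorem~\ref{mockperiod} gives the common nonzero value $\int_{-1}^{i\infty}\frac{r_f(w)}{(w+z)^k}\,dw + \int_{-1}^{0}\frac{r_f|_{2-k}\widetilde U(w)}{(w+z)^k}\,dw$---so what you must show is that both sides equal this same expression, not that each collapses to zero. The bookkeeping with the three substitutions $w\mapsto\widetilde U^{j}w$ and the relation $r_f|_{2-k}(1+\widetilde U+\widetilde U^2)=0$ is genuine and should be written out rather than declared.
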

Theorem \ref{rf2} suggests the name \textit{mock period
function}
for $r_{f, 2}$ (see Definition \ref{definemock})

The completion  of $r_{f, 2}$  by a purely non-holomorphic
term does not
cause us to lose
information about non-critical values, because it only introduces
critical values (see Lemma \ref{tilde2rf}),
which from our viewpoint can be thought of as understood.

The \textit{second} link between the two main subjects of the
paper amounts to a technique that allows us to encode information about
the
mock period function of $f \in S_k$ into a certain ``higher order" version
of
harmonic Maass forms. This is the direct analogue of a recent
result
proved for
critical values by the first author, Guerzhoy, Kent, and Ono (Theorem 1.1
of \cite{BGKO}) and in a different guise earlier in \cite{Fay}:
\begin{theorem}\label{periodPoincareintro1} (\cite{Fay, BGKO})
For each $f \in S_k$, there is a harmonic
Maass form $M_f$
with holomorphic part $M_f^+$, such that
\begin{equation*}\label{moddef}
r_f(-z)=M_f^+|_{2-k}(1-S).
\end{equation*}
\end{theorem}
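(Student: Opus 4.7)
The strategy is to construct a harmonic weak Maass form $M_f$ of weight $2-k$ for $\mathrm{SL}_2(\Z)$ whose shadow $\xi_{2-k}(M_f)$ is a nonzero multiple of the conjugate cusp form $f^c(z):=\overline{f(-\overline z)}\in S_k$, and then to exploit the modularity of $M_f$ to extract $r_f(-z)$ as the obstruction to modularity of the holomorphic part $M_f^+$. Existence of such an $M_f$ is standard: either invoke surjectivity of $\xi_{2-k}$ from harmonic weak Maass forms of weight $2-k$ onto $S_k$ (Bruinier--Funke), or construct $M_f$ explicitly as a Maass--Poincar\'e series of weight $2-k$, which is Fay's original approach. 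Decompose $M_f=M_f^++M_f^-$ into its holomorphic and non-holomorphic parts.

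With the shadow normalized suitably, the non-holomorphic part admits an Eichler-type integral representation
$$
M_f^-(z) \;=\; C\int_{-\overline z}^{i\infty} f(w)\,(w+z)^{k-2}\,dw
$$
for an explicit nonzero constant $C$. This is verified by differentiating in $\overline z$ (only the lower limit contributes, since the integrand is holomorphic in $w$) and checking that the resulting $\xi_{2-k}$-image is the specified multiple of $f^c$. Since $M_f$ is modular of weight $2-k$, we have $M_f|_{2-k}(1-S)=0$, and consequently
$$
M_f^+\big|_{2-k}(1-S) \;=\; -\,M_f^-\big|_{2-k}(1-S),
$$
reducing the theorem to an evaluation of the right-hand side.

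The key calculation is to evaluate $M_f^-|_{2-k}S$ by substituting $w\mapsto -1/w$ in the integral and invoking $f|_kS=f$. Under this change of variables, the contour from $1/\overline z$ to $i\infty$ appearing in $M_f^-(-1/z)$ is transformed into the segment from $-\overline z$ to $0$, and the weight-$k$ modularity of $f$ together with the identity $(-1)^{k-2}=1$ (using $k$ even) recombines all Jacobian factors into the clean integrand $f(u)(u+z)^{k-2}$. After subtracting and using the path additivity $[-\overline z,i\infty]=[-\overline z,0]\cup[0,i\infty]$, the $z$-dependent endpoints cancel and only the vertical geodesic from $0$ to $i\infty$ survives:
$$
M_f^-\big|_{2-k}(1-S)(z) \;=\; C\int_0^{i\infty} f(w)\,(w+z)^{k-2}\,dw \;=\; C\cdot r_f(-z).
$$
Choosing $C=-1$ yields $M_f^+|_{2-k}(1-S)=r_f(-z)$. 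The principal obstacle is the bookkeeping of constants, signs, and factors of $i$ coming from the change of variables and from fixing the normalization of the shadow; the clean cancellation of endpoints rests essentially on $k$ being even.
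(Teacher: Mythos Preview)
The paper does not give its own proof of this statement; it is quoted as a known result from \cite{Fay} and \cite{BGKO} and is used as a black box in Section~\ref{ESchar} (see \eqref{bgko}). Your argument is correct and is essentially the standard one: existence of $M_f$ with prescribed shadow comes from surjectivity of $\xi_{2-k}$ onto $S_k$ (or from Maass--Poincar\'e series, as in \cite{Fay} and as the paper itself does in \eqref{dipoincare}); the non-holomorphic part is the Eichler-type integral $\int_{-\overline z}^{i\infty}f(w)(w+z)^{k-2}\,dw$; and the substitution $w\mapsto -1/w$ together with $f|_kS=f$ collapses $M_f^-|_{2-k}(1-S)$ to the period integral over $[0,i\infty]$, which is $r_f(-z)$.

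For comparison, the paper carries out precisely this template one level higher in its proof of Theorem~\ref{periodPoincare}: there the integral $F_{f,2}$ plays the role of your $M_f^-$, Corollary~\ref{surjective} replaces Bruinier--Funke surjectivity, and the identification of $F_{f,2}$ with the relevant piece $\mathcal{M}_{f,2}^{+-}$ of the sesquiharmonic form is done via the vanishing criteria \eqref{vanishxi}--\eqref{vanishDxi} rather than by writing down the non-holomorphic part directly. So your approach is fully in line with both the cited references and the paper's own methods.
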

The authors further use similar techniques to establish a
structure theorem
for $W_{k-2}$ (Theorem 1.2 of \cite{BGKO}).

The first step of our approach towards establishing the counterpart of
Theorem \ref{periodPoincareintro1} for non-critical values is to identify
the objects taking the role played by harmonic Maass forms in \cite{BGKO}.
The class of these objects  is formed by
 \textit{sesquiharmonic  Maass forms} (see Definition \ref{sesquiharm}).
Sesquiharmonic Maass form  are  natural higher
order versions of harmonic Maass forms, the first example of which has
appeared in a
different  context \cite{DI, DIT}. (See also \cite{Br1, Br2, Br3} for an
earlier application
of the underlying method).
The main difference of sesquiharmonic to harmonic Maass forms is that the
latter are annihilated by
the weight $k$ Laplace-operator
\[
\Delta_k:=-y^2\left(\frac{\partial^2}{\partial
x^2}+\frac{\partial^2}{\partial
y^2}\right)+iky\left(\frac{\partial}{\partial x}+i\frac{\partial}{\partial
y}\right)
\text{,}
\]
whereas sesquiharmonic Maass forms are annihilated by
\[
\Delta_{k, 2}:=\Delta_{2-k}\circ
\xi_k=-\xi_k\circ\xi_{2-k}\circ\xi_k=\xi_k\circ\Delta_k.
\]
 In Section \ref{Sesquisection}, we will show that we can isolate a
``harmonic" piece from each sesquiharmonic Maass, paralleling the way
we can isolate a ``holomorphic" piece from each harmonic Maass form.
This construction allows us to formulate and prove the analogue of
Theorem \ref{periodPoincareintro1}:
\begin{theorem}\label{periodPoincareintro}
For each $f \in S_k$, there is a sesquiharmonic
Maass form $M_{f, 2}$
with harmonic part $M_{f, 2}^{+-}$, such  that
\[
\widehat{r}_{f, 2}(z)= M_{f, 2}^{+-}(z)\Big|_k(S-1).
\]
\end{theorem}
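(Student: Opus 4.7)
The plan is to mimic the proof strategy of Theorem \ref{periodPoincareintro1}, with harmonic Maass forms replaced by sesquiharmonic ones and the single Eichler integral replaced by an iterated one. The model to keep in mind is as follows: $M_f$ is (essentially) a weight $2-k$ Maass--Poincar\'e series whose $\xi_{2-k}$-image is proportional to $f$, and the splitting $M_f = M_f^+ + M_f^-$ is arranged so that the $(1-S)$-action on $M_f^+$ produces the period polynomial $r_f$. I would construct a weight $k$ function $M_{f, 2}$ whose $\xi_k$-image is (up to an explicit constant) the harmonic Maass form $M_f$ from Theorem \ref{periodPoincareintro1}. This automatically forces $M_{f, 2}$ to be sesquiharmonic, because
$$
\Delta_{k, 2}(M_{f, 2}) = (\Delta_{2-k}\circ \xi_k)(M_{f, 2}) \,\propto\, \Delta_{2-k}(M_f) = 0.
$$

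Concretely, the natural candidate has the shape
$$
M_{f, 2}(z) := c_k \int_{-\overline z}^{i\infty} \frac{M_f(w)}{(w+z)^k}\, dw
$$
for a suitable constant $c_k$ and a consistently chosen branch of $(w+z)^{-k}$, i.e.\ a Zwegers-type lift of $M_f$. I would first verify that $M_{f, 2}$ transforms like a weight $k$ form for $\mathrm{SL}_2(\mathbb Z)$, using the modularity of $M_f$ together with a change of variable $w \mapsto \g w$ that pairs the shift $z \mapsto \g z$ with the endpoint shift $-\overline z \mapsto -\overline{\g z}$. Splitting the integral according to $M_f = M_f^+ + M_f^-$, the $M_f^-$ contribution unfolds (via $\xi_{2-k}(M_f^-) \propto f$ and the usual swap of integrations) into the holomorphic generating series $r_{f, 2}(z)$, while the $M_f^+$ contribution feeds, upon taking the $(S-1)$-action, exactly the non-holomorphic correction term $\int_{-\overline z}^{i\infty} r_f(w)/(w+z)^k\, dw$ appearing in $\widehat r_{f, 2}$. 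The harmonic part $M_{f, 2}^{+-}$ is then isolated via the Fourier-theoretic decomposition for sesquiharmonic Maass forms developed in Section \ref{Sesquisection}, which identifies it with the two pieces that lie in the kernel of $\Delta_k$.

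The concluding step is the verification of $\widehat r_{f, 2} = M_{f, 2}^{+-}|_k(S-1)$. Since $M_{f, 2}$ itself is weight $k$ modular, $M_{f, 2}|_k(S-1) = 0$, so it suffices to compute $(M_{f, 2} - M_{f, 2}^{+-})|_k(S-1)$, i.e.\ the $(S-1)$-image of the purely non-harmonic piece of $M_{f, 2}$. By construction this piece is an iterated integral depending on $f$ directly (not on $M_f^+$); changing variables via $Sz = -1/z$ and unfolding the inner Eichler integral reproduces $-r_{f, 2}$ together with the non-holomorphic correction $-\int_{-\overline z}^{i\infty} r_f(w)/(w+z)^k\, dw$ with the correct signs. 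The main obstacle is the careful management of these iterated contour integrals: both the sesquiharmonicity calculation and the $(S-1)$-identity require contours that depend on $z$ and $\overline z$ simultaneously, branches of $(w+z)^{-k}$ chosen coherently across $\mathrm{SL}_2(\mathbb Z)$-shifts, and a justification for the interchange of integrations and the unfolding of the outer integral at $i\infty$. Once these technicalities are in place, the theorem follows by matching the resulting terms with the explicit formula defining $\widehat r_{f, 2}$.
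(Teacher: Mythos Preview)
Your approach differs from the paper's and has a genuine gap. The paper does not build $M_{f,2}$ as a Zwegers-type lift of the harmonic Maass form $M_f$. Instead it constructs sesquiharmonic forms directly as Poincar\'e series $\field{P}_{k,2}(-m;z) := \mathcal{P}_k(\psi_{-m};z)$ with $\psi_{-m}(z) = \frac{d}{ds}\bigl[\mathcal{M}_s^k(-4\pi m y)\bigr]_{s=k/2}\,e(-mx)$, i.e.\ by differentiating Maass--Poincar\'e series in the spectral parameter $s$; absolute convergence and the sesquiharmonic equation come from Whittaker estimates. For a suitable combination $\mathcal{M}_{f,2}$ of these it then shows that the harmonic part $\mathcal{M}_{f,2}^{+-}$ coincides \emph{exactly} with the iterated Eichler integral $F_{f,2}(z)=\int_{-\overline z}^{i\infty}F_f(w)(w+z)^{-k}\,dw$: both have Fourier expansions of the shape $\sum b(n)\Gamma(1-k,4\pi n y)q^{-n}$, and a direct computation gives $D^{k-1}\circ\xi_k(F_{f,2}) = -\tfrac{(k-2)!}{(4\pi)^{k-1}}f^c = D^{k-1}\circ\xi_k(\mathcal{M}_{f,2})$, which forces $F_{f,2}=\mathcal{M}_{f,2}^{+-}$ by the Fourier-theoretic characterization of the pieces. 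The identity $\widehat r_{f,2}=F_{f,2}\big|_k(S-1)$ is then simply Proposition~\ref{SuperM}.

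The gap in your plan is convergence. The harmonic Maass form $M_f$ of Theorem~\ref{periodPoincareintro1} has, in general, exponential growth at $i\infty$ through the principal part of $M_f^+$, so the integral $\int_{-\overline z}^{i\infty}M_f(w)(w+z)^{-k}\,dw$ diverges and your candidate $M_{f,2}$ is not defined as written. The paper meets exactly this obstacle in Section~\ref{ESchar} when it builds $\mathcal{F}_{f,2}^*$ from $M_f^+$ and is forced to introduce regularized integrals. Even granting regularization, the piece-by-piece identification you sketch does not reproduce $\widehat r_{f,2}$: by Theorem~\ref{periodPoincareintro1} the $(S-1)$-action on the $M_f^+$-piece yields the kernel $r_f(-w)$ rather than $r_f(w)$, hence $\widetilde r_{f,2}^*$ rather than $\widetilde r_{f,2}$; and the $M_f^-$-piece is a lift of the \emph{non-holomorphic} Eichler integral of (a conjugate of) $f$, not of the holomorphic $F_f$ that enters $r_{f,2}$. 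The paper sidesteps all of this by working throughout with the decaying function $F_f$ (so no regularization is needed) and supplying the ambient sesquiharmonic form from the Poincar\'e-series side rather than by lifting $M_f$.
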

The above two techniques we just described  can be considered as a
new version of the ``completion" method, this time applied to the level of
$1$-cohomology.

The \textit{third} main result and technique of this paper is a
mock Eichler-Shimura isomorphism for $W_{k, 2}.$ The
classical Eichler-Shimura isomorphism ``parametrizes" $W_{k-2}$
in terms of cusp forms. It can be summarized as:
\begin{theorem}\label{ESisom1} (e.g., \cite{KoZ}) Every $P \in W_{k-2}$ can
be written as
$$
P(X)=r_f(X)+r_g(-X)+a|_{2-k}(S-1)
$$
for  unique  $f, g \in S_k$ and $a \in \C$.
\end{theorem}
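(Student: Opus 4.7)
The plan is to construct the explicit linear map
\[
\pi: S_k \oplus S_k \oplus \C \longrightarrow W_{k-2}, \qquad (f,g,a) \longmapsto r_f(X) + r_g(-X) + a\big|_{2-k}(S-1),
\]
and then verify it is a bijection. The first step is to check that $\pi$ lands in $W_{k-2}$: for $r_f$ the two period relations are the classical Eichler--Shimura relations stated in the introduction; for $r_g(-X) = r_g|_{2-k}\epsilon$ with $\epsilon = \left(\begin{smallmatrix}-1 & 0 \\ 0 & 1\end{smallmatrix}\right)$, conjugation by $\epsilon$ sends $S \mapsto -S$ and $U \mapsto U^{-1}$, so the relations for $r_g$ transfer to $r_g(-X)$ (the sign on $S$ is absorbed because $k$ is even, and $U^{-1}$ satisfies the same cube relation). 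The coboundary $a(X^{k-2}-1) = a|_{2-k}(S-1)$ satisfies $P|_{2-k}(1+S) = 0$ automatically from $S^2 = I$ on $V_{k-2}$, and the three-term relation reduces to the identity $(S-1)(1+U+U^2)=0$ in the group ring modulo the kernel of the action.

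For injectivity, the key tool is Haberland's formula, which equips $V_{k-2}$ with a pairing $\{\cdot,\cdot\}$ satisfying $\{r_{f_1}, r_{f_2}^{-}\} = c_k \langle f_1, f_2\rangle$ for a nonzero constant $c_k$, where $P^{-}(X) := P(-X)$ and $\langle\cdot,\cdot\rangle$ is the Petersson inner product, while $\{r_{f_1}, r_{f_2}\} = \{r_{f_1}^{-}, r_{f_2}^{-}\} = 0$. Assuming $\pi(f,g,a) = 0$, pairing against $r_h^{-}$ for arbitrary $h \in S_k$ isolates $\langle f, h\rangle = 0$, hence $f = 0$ by non-degeneracy of Petersson; pairing against $r_h$ analogously forces $g = 0$; and the residual identity $a(X^{k-2}-1) = 0$ forces $a=0$. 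Surjectivity then follows from a dimension count: the source has dimension $2\dim S_k + 1$, while $\dim_{\C} W_{k-2} = 2\dim S_k + 1$ is a standard computation of the parabolic cohomology $H^1_{\mathrm{par}}(\mathrm{SL}_2(\Z), V_{k-2})$ together with the one-dimensional space of coboundaries $\C\cdot(X^{k-2} - 1)$.

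The hard part is really Haberland's formula, whose proof requires a careful Stokes-theorem argument on a fundamental domain for $\mathrm{SL}_2(\Z)$, rewriting the Petersson inner product as a contour integral that telescopes into boundary contributions expressible in period polynomials. An alternative route that sidesteps some of this machinery is to use the explicit Cohen--Kohnen--Zagier construction of cusp forms as Poincar\'e-type generating series of periods: for each candidate $P \in W_{k-2}$ one builds, by spectral expansion against Poincar\'e series, an explicit pair $(f,g)$ whose combined period polynomial matches $P$ modulo the coboundary line, thereby producing the decomposition $(f,g,a)$ by hand and circumventing the need for the full pairing.
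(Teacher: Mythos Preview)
The paper does not give its own proof of this theorem: it is quoted as a classical result (with reference to \cite{KoZ}), and is restated without proof as Theorem~\ref{ES}. So there is no argument in the paper to compare against; your sketch is essentially an outline of the standard Kohnen--Zagier proof via Haberland's pairing and a dimension count.

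A couple of points where your write-up should be tightened. First, the relation $(S-1)(1+U+U^2)=0$ does \emph{not} hold in the group ring of $\Gamma$; what makes the coboundary term satisfy the three-term relation is that the constant $a$ is $T$-invariant, so $a|_{2-k}(S-1)=a|_{2-k}(TS-1)=a|_{2-k}(U-1)$, and then $(U-1)(1+U+U^2)=U^3-1=0$. Your phrase ``modulo the kernel of the action'' gestures at this, but the actual mechanism is the $T$-invariance of $a$, not a group-ring identity. Second, in the injectivity step you tacitly use that the coboundary $a(X^{k-2}-1)$ lies in the radical of the pairing $\{\cdot,\cdot\}$ on $W_{k-2}$ (equivalently, that the pairing descends to parabolic cohomology); otherwise pairing against $r_h^{-}$ would not isolate $\langle f,h\rangle$. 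This is true and standard, but it is the crux of why the argument works and should be stated. With these two adjustments your outline is correct and matches the approach of \cite{KoZ}.
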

In Section \ref{ESchar},
we show that $W_{k, 2}$ can be
``parametrised" by cusp forms  in a very similar fashion:
\begin{theorem}\label{ESisom2} Every $P \in W_{k, 2}$ can
be written as
$$P=\widehat r_{f, 2}+\widehat r^*_{g, 2}+a F|_k(S-1)
$$
for   unique  $f, g \in S_k$ and an $a \in \C$.
Here, $F$ is an element of an appropriate space of functions on $\HH$
and $\widehat r^*_{g, 2}$ is a period function associated $r_g(-X)$. (They
will be defined precisely in Section \ref{ESchar}).
\end{theorem}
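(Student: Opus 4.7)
The plan is to reduce Theorem~\ref{ESisom2} to the classical Eichler--Shimura isomorphism (Theorem~\ref{ESisom1}) via the operator $\xi_k$. The key structural observation is that $\xi_k$ maps $W_{k,2}$ into $W_{k-2}$: the definition of $W_{k,2}$ already forces $\xi_k P \in V_{k-2}$, and the standard intertwining identity $\xi_k(G|_k\gamma) = (\xi_k G)|_{2-k}\gamma$ from the theory of harmonic Maass forms converts the cocycle relations $P|_k(1+S)=P|_k(1+U+U^2)=0$ into the Eichler--Shimura relations satisfied by elements of $W_{k-2}$.

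For existence, the first step is to compute $\xi_k$ on each of the three building blocks. Differentiating under the non-holomorphic completion integrals in $\widehat r_{f,2}$ (as defined in Theorem~\ref{W_k,2intro}) and in its mirror $\widehat r^*_{g,2}$ shows that $\xi_k(\widehat r_{f,2})$ and $\xi_k(\widehat r^*_{g,2})$ are explicit non-zero multiples of $r_f(X)$ and $r_g(-X)$, respectively, while the function $F$ is to be fixed so that $\xi_k F$ is a non-zero constant, whence $\xi_k(F|_k(S-1))$ is a non-zero multiple of the coboundary $1|_{2-k}(S-1)$. Given $P \in W_{k,2}$, applying Theorem~\ref{ESisom1} to $\xi_k P \in W_{k-2}$ produces unique $f,g\in S_k$ and a constant $\alpha$ with
\[
\xi_k P \;=\; \lambda_1\, r_f(X) + \lambda_2\, r_g(-X) + \alpha\, 1|_{2-k}(S-1).
\]
Choosing $a$ to match the last summand, the difference $Q := P - \widehat r_{f,2} - \widehat r^*_{g,2} - a F|_k(S-1)$ lies in $W_{k,2}$ and is killed by $\xi_k$.

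The main obstacle is then showing that this $Q$ must vanish. Since $\xi_k Q = 0$, $Q$ is holomorphic on $\HH$, and it still satisfies the weight-$k$ cocycle relations; without further input the space of such holomorphic cocycles is large, so the argument must exploit the growth and integrability conditions built into the ``appropriate space of functions'' in which $W_{k,2}$ is defined --- conditions inherited from the absolutely convergent integrals defining $\widehat r_{f,2}$, $\widehat r^*_{g,2}$, and from the behaviour of $F$ at the cusp. Under those conditions I would expect a Bol-operator or repeated integration-by-parts argument to identify holomorphic elements of $W_{k,2}$ with weight-$k$ period integrals already accounted for by cusp-form contributions, forcing $Q = 0$.

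Uniqueness then follows immediately: applying $\xi_k$ to a hypothetical relation $\widehat r_{f,2} + \widehat r^*_{g,2} + aF|_k(S-1) = 0$ gives $\lambda_1 r_f(X) + \lambda_2 r_g(-X) + a(\xi_k F)\, 1|_{2-k}(S-1) = 0$, and the uniqueness assertion in Theorem~\ref{ESisom1} forces $f=g=0$ and $a=0$.
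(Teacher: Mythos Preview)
Your reduction via $\xi_k$ to the classical Eichler--Shimura isomorphism matches the paper's approach, and your computation of $\xi_k$ on the building blocks and the uniqueness argument for $f,g$ are essentially correct. The gap is in the treatment of the holomorphic remainder $Q$ and, relatedly, in your reading of the third term.

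In the paper's precise version (Section~\ref{ESchar}), the third summand is not a scalar multiple of a single fixed coboundary; rather, it ranges over $V_{k,2}:=U_{k,2}|_k(S-1)$, where $U_{k,2}$ contains \emph{all} translation-invariant holomorphic functions on $\mathfrak H$ (together with a finite-dimensional non-holomorphic piece mapped by $\xi_k$ into $V_{k-2}$). Consequently $Q$ is \emph{not} forced to vanish, and the decomposition is an isomorphism $S_k\oplus S_k\cong W_{k,2}/V_{k,2}$. Your hope that growth or integrability constraints built into $W_{k,2}$ would kill $Q$ cannot work: the definition of $W_{k,2}$ imposes no growth condition whatsoever, and for weight $k>0$ the space of holomorphic functions satisfying $Q|_k(1+S)=Q|_k(1+U+U^2)=0$ is infinite-dimensional (e.g.\ $h|_k(S-1)$ for any $T$-invariant holomorphic $h$). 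A Bol-operator argument goes the wrong way in weight~$k$.

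What the paper actually uses to handle $Q$ is a cohomological vanishing theorem: since $Q$ is holomorphic and satisfies the weight-$k$ Eichler--Shimura relations, it represents a parabolic $1$-cocycle with values in $\mathcal O(\mathfrak H)$, and Kra's result $H^1(\Gamma,\mathcal O(\mathfrak H))=0$ (quoted via Knopp) forces $Q=H|_k(S-1)$ for some $T$-invariant holomorphic $H$. This $H|_k(S-1)$ is then absorbed into the third term. So the missing ingredient in your argument is precisely this vanishing of Eichler cohomology with holomorphic coefficients, not any analytic estimate. Correspondingly, only $f$ and $g$ are uniquely determined; the third summand is unique only as a class in $V_{k,2}$.
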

The construction of $\widehat r^*_{g, 2}$ is of independent
interest and involves (regularized) integrals  (see Section \ref{ESchar}).
 Some of the
techniques are related to the
theory of periods of weakly holomorphic forms as studied by Fricke \cite{Fr}.

It is surprising that
pairs of cusp forms suffice for this Mock Eichler-Shimura
isomorphism just as they suffice for the classical
Eichler-Shimura isomorphism.  A priori,  the spaces $W_{k-2}$ and
$W_{k, 2}$ appear to be very different, especially since, as shown here,
they are associated with critical and non-critical values respectively,
which are expected to have completely different behaviour.

In the final section we interpret our two first main results
cohomologically (Theorem \ref{periodPoincare'}) in order to highlight the essential
similarity of the construction we associate here to non-critical
values with the corresponding setting for critical values. Since we
have an entirely analogous reformulation (see \eqref{ESBGKO}) of the
Eichler-Shimura theory and the
results of \cite{BGKO}, Theorem \ref{periodPoincare'} justifies the claim
that our constructions form the non-critical value counterpart of the
corresponding results in the case of \textit{critical} values of
$L$-functions.

A suggestive comparison of this cohomological interpretation with
Hida's evidence for a possible description of non-critical values in
terms of non-top degree cohomology (cf. \cite{Hi}) might also be made. We
intend to
return to possible explicit connections with Hida's construction in a
future work.

\vspace{0.5em}\noindent
\textit{Acknowledgments}: To be entered after the referee's report is received.

\section{Cusp forms and periods associated to their $L$-values}
\label{periods}
 Set $\G:= {\mathrm SL}_{2}(\Z)$.
Let $f(z)=\sum_{n=1}^{\infty} a(n) q^n$
($q=e^{2 \pi i z}$)
be a cusp form of weight $k$ for $\Gamma$.
Further let $L_f(s)$ be the entire function obtained by analytic continuation of
the series $L_f(s)=\sum_{n=1}^{\infty} a(n)/n^s$ originally defined in an appropriate
right half plane.

In the Eichler-Shimura-Manin
theory one associates to $f$ an Eichler integral $F_f: \HH \to \C$
and a period
polynomial  $r_f: \C \to \C$
as follows:
\begin{eqnarray*}
F_f(z)&:=&\int_{z}^{i\infty} f(w)(w-z)^{k-2} dw,\\
r_f(z)&:=&\int_0^{i\infty} f(w)(w-z)^{k-2}dw.
\end{eqnarray*}
These objects are connected to each other and intimately related to
critical values of $L_f(s)$ (see e.g.  \cite{KoZ}, Section 1.1):
$L_f(1), \dots, L_f(k-1)$.
\begin{lemma} \label{r_f} For every $f \in S_k,$  we have
 \begin{eqnarray*}
 F_f|_{2-k}(1-S)&=&r_f, \\
r_f(z)&=& -\frac{(k-2)!}{(2 \pi i)^{k-1}}
\sum_{n=0}^{k-2}
\frac{L_f(n+1)}{(k-2-n)!}(2 \pi i z)^{k-2-n}.
\end{eqnarray*}
\end{lemma}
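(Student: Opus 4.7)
The plan is to prove the two assertions by direct unfolding of the defining integrals, using only the modular transformation of $f$ and its Mellin transform.

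For the first assertion, I would write
\[
F_f|_{2-k}S(z) = z^{k-2}F_f(-1/z) = z^{k-2}\int_{-1/z}^{i\infty} f(w)(w+1/z)^{k-2}dw
\]
and substitute $w = -1/u$. The Jacobian contributes $du/u^2$, the factor $(w+1/z)^{k-2}$ becomes $((u-z)/(uz))^{k-2}$, and modularity gives $f(-1/u)=u^k f(u)$; the powers of $u$ and $z$ cancel cleanly. Tracking the endpoints ($w=-1/z \leftrightarrow u=z$, $w=i\infty \leftrightarrow u=0$) produces $F_f|_{2-k}S(z)= -\int_0^z f(u)(u-z)^{k-2}du$, so that
\[
F_f|_{2-k}(1-S)(z)=\int_z^{i\infty}f(w)(w-z)^{k-2}dw+\int_0^z f(w)(w-z)^{k-2}dw=r_f(z).
\]

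For the second assertion, I would expand the integrand by the binomial theorem,
\[
r_f(z)=\sum_{j=0}^{k-2}\binom{k-2}{j}(-z)^j\int_0^{i\infty}f(w)w^{k-2-j}dw,
\]
interchange sum and integral (justified by cuspidality, which gives exponential decay at $i\infty$), and reduce to computing each integral $\int_0^{i\infty} f(w)w^{k-2-j}dw$. Parametrizing $w=it$ converts it to $i^{k-1-j}\int_0^{\infty} f(it)t^{k-2-j}dt$, and the standard Mellin formula
\[
\int_0^{\infty}f(it)t^{s-1}dt=\frac{\Gamma(s)}{(2\pi)^{s}}L_f(s),
\]
applied at $s=k-1-j$, evaluates this to $i^{k-1-j}(k-2-j)!\,(2\pi)^{-(k-1-j)}L_f(k-1-j)$. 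Substituting back into the binomial sum and reindexing via $n=k-2-j$ produces the stated expression.

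The argument involves no real obstacle beyond careful bookkeeping. The substitution $w\mapsto -1/u$ in part~(i) must exactly absorb the modular factor $u^k$ into the Jacobian and the $(uz)^{-(k-2)}$ coming from $(w+1/z)^{k-2}$. In part~(ii), the combination $\binom{k-2}{k-2-n}(k-2-n)!\,i^{n+1}(-1)^{k-2-n}$ must be collected into the factor $-(k-2)!\,(2\pi i)^{-(n+1)}(2\pi i)^{k-2-n}$; the collapse relies on $k$ being even so that $(-1)^{k-2-n}=(-1)^n$, after which the elementary identity $i/(-i)^{-n}=-1/i^{n+1}\cdot(-1)$ recovers the powers $(2\pi i)^{k-2-n}/(2\pi i)^{k-1}$ exactly as claimed.
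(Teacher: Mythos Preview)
Your proposal is correct; both identities fall out exactly as you describe, and the bookkeeping in the final paragraph checks (the key reduction $(-1)^n i^{n+1}=-i^{-(n+1)}$ is what matches your expression to the claimed form). The paper does not actually supply a proof of this lemma---it is stated as a well-known fact with a reference to \cite{KoZ}, Section~1.1---and your argument via the substitution $w\mapsto -1/u$ for the first identity and the Mellin transform for the second is precisely the standard classical derivation one finds there.
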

We shall consider the analogues of $F_f$ and $r_f$ yielding non-critical
values of $L_f(s)$. Set
\begin{eqnarray*}
F_{f, 2}(z)&:=&\int_{-\overline{z}}^{i\infty} \frac{F_f(w)}{(w+z)^k} dw,\\
r_{f, 2}(z)&:=&\left.\left( \int_0^{i\infty} \frac{F_f(w)}{(w+z)^k} dw \right) \right
|_k
S=\int_0^{i\infty} \frac{F_f(w)}{(wz-1)^k} dw.
\end{eqnarray*}
The function $r_{f, 2}$ is not a polynomial, but the next lemma, proved in
\cite{CD}, shows that we
can still retrieve values of $L$-functions of $f$ as its ``Taylor
coefficients at $0$". It also explains the reason for letting $S$ act on
the integral in the definition of $r_{f, 2}$ in an apparent disanalogy to
$r_f$:
\begin{lemma} \label{r_{f,2}} For every $f \in S_k$ and
$m \in \mathbb{N}$, we have
\begin{equation*} \lim_{z \to 0^+} \frac{d^m }{dz^m}\left( r_{f, 2}(z) \right)
=i^{k+m}\frac{(m+k-1)! m!}{(k-1) (2 \pi)^{m+k}}
L_f(k+m).
\end{equation*}
\end{lemma}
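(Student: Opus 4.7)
The plan is to differentiate $r_{f,2}$ under the integral sign $m$ times and then pass to $z \to 0^+$. Using
\[
\frac{d^m}{dz^m}(wz-1)^{-k}=(-1)^m\frac{(k+m-1)!}{(k-1)!}\,\frac{w^m}{(wz-1)^{k+m}},
\]
one obtains
\[
\frac{d^m}{dz^m}r_{f,2}(z)=(-1)^m\frac{(k+m-1)!}{(k-1)!}\int_0^{i\infty}\frac{F_f(w)\,w^m}{(wz-1)^{k+m}}\,dw.
\]
Taking $z\to 0^+$ replaces $(wz-1)^{k+m}$ by $(-1)^{k+m}=(-1)^m$ (since $k$ is even), and the two signs cancel to give
\[
\lim_{z\to 0^+}\frac{d^m}{dz^m}r_{f,2}(z)=\frac{(k+m-1)!}{(k-1)!}\int_0^{i\infty}F_f(w)\,w^m\,dw.
\]

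Next, I would unfold the definition $F_f(w)=\int_w^{i\infty}f(u)(u-w)^{k-2}du$ and swap the order of integration by Fubini, so that $u$ ranges over $[0,i\infty]$ and, for fixed $u$, the variable $w$ traces the segment from $0$ to $u$. Parametrizing $w=tu$ with $t\in[0,1]$ turns the inner integral into a beta function:
\[
\int_0^u(u-w)^{k-2}w^m\,dw=u^{k+m-1}\int_0^1(1-t)^{k-2}t^m\,dt=u^{k+m-1}\,\frac{m!\,(k-2)!}{(k+m-1)!}.
\]
Hence
\[
\int_0^{i\infty}F_f(w)\,w^m\,dw=\frac{m!\,(k-2)!}{(k+m-1)!}\int_0^{i\infty}f(u)\,u^{k+m-1}\,du.
\]

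For the remaining integral, I would substitute $u=it$ and integrate the Fourier expansion of $f$ termwise against $t^{k+m-1}\,dt$. This standard Mellin-transform calculation yields
\[
\int_0^{i\infty}f(u)\,u^{k+m-1}\,du=i^{k+m}\,\frac{(k+m-1)!}{(2\pi)^{k+m}}\,L_f(k+m).
\]
Collecting all factors and using $(k-2)!/(k-1)!=1/(k-1)$ gives exactly the claimed formula.

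The main technical point is to justify the interchanges (differentiating and taking the limit under the integral sign, as well as Fubini). Since $f$ is cuspidal, $F_f$ decays exponentially at $i\infty$; combined with boundedness of $F_f$ near $0$ and the uniform boundedness of $(wz-1)^{-k-m}$ for $z$ in a small real neighbourhood of $0$ and $w$ on the imaginary axis, standard dominated-convergence estimates suffice. I do not anticipate this step to be a real obstacle.
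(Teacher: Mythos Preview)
Your argument is correct. The paper does not give its own proof of this lemma but simply cites \cite{CD}; the direct computation you outline --- differentiating under the integral, letting $z\to 0$, applying Fubini together with the beta-integral evaluation, and then recognizing the Mellin transform of $f$ --- is the natural route and is in line with that reference.
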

In \cite{CD}, it is also proved that $F_{f, 2}$ and $r_{f, 2}$ are linked
in
a
way that parallels the link between $F_f$ and $r_f$. For our purposes, we
will need a reformulation of that result:
\begin{proposition} \label{SuperM} For every $f \in S_k,$ we have
\begin{equation} \label{SuperMformula}
\left. F_{f, 2}\right|_k(S-1)=
r_{f, 2}-\widetilde r_{f, 2}
\end{equation}
with
$$
\widetilde r_{f, 2}(z):=
\int_{-\overline z}^{i\infty} \frac{r_f(w)}{(w+z)^k}dw.$$
\end{proposition}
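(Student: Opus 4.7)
The plan is to show the identity by unfolding $F_{f,2}|_k S$ via a change of variables $w\mapsto -1/w$, using the functional equation $F_f|_{2-k}(1-S)=r_f$ from Lemma \ref{r_f} to replace $F_f(-1/u)$ by $u^{2-k}(F_f(u)-r_f(u))$, and then recognizing the pieces that appear as $F_{f,2}$, $\widetilde r_{f,2}$ and $r_{f,2}$.

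Concretely, I would first compute
\[
F_{f,2}\big|_k S(z)= z^{-k}\,F_{f,2}(-1/z)= z^{-k}\int_{1/\overline{z}}^{i\infty}\frac{F_f(w)}{(w-1/z)^k}\,dw,
\]
using $-\overline{-1/z}=1/\overline z$. Substituting $w=-1/u$, using $(w-1/z)^k=(z+u)^k/(uz)^k$ (valid since $k$ is even, after checking the branch), and inserting $F_f(-1/u)=u^{2-k}(F_f(u)-r_f(u))$ from Lemma \ref{r_f}, all the $z$'s and most of the $u$'s cancel and I arrive at the key intermediate identity
\[
F_{f,2}\big|_k S(z)=\int_{-\overline z}^{0}\frac{F_f(u)-r_f(u)}{(z+u)^k}\,du.
\]
The contour $-\overline z \to 0$ is taken as a straight line; since $-\overline z, 0\in\overline{\HH}$ and the integrand is holomorphic in a neighbourhood (with $F_f$ of rapid decay toward $i\infty$ and the integrand having no poles in the open upper half-plane), one may deform the path freely, which will be used in the next step.

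Next I would split $\int_{-\overline z}^{0}=\int_{-\overline z}^{i\infty}-\int_0^{i\infty}$ (deforming through the upper half-plane). The first piece is exactly $F_{f,2}(z)-\widetilde r_{f,2}(z)$ by the very definitions of these functions. For the second piece, I rewrite $F_f(u)-r_f(u)=(F_f|_{2-k}S)(u)=u^{k-2}F_f(-1/u)$ and perform a second substitution $u\mapsto -1/v$. An identical branch/power calculation (again using $k$ even) collapses the integrand to $F_f(v)/(vz-1)^k$, while the contour orientation on the imaginary axis is reversed, producing
\[
\int_0^{i\infty}\frac{F_f(u)-r_f(u)}{(z+u)^k}\,du = -\int_0^{i\infty}\frac{F_f(v)}{(vz-1)^k}\,dv = -r_{f,2}(z).
\]
Combining the three pieces yields $F_{f,2}|_k S(z)=F_{f,2}(z)-\widetilde r_{f,2}(z)+r_{f,2}(z)$, which rearranges to \eqref{SuperMformula}.

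The main obstacle is the bookkeeping rather than any conceptual difficulty: one must track signs such as $(-1)^k=1$ and verify that the branches of $(w-1/z)^{-k}$, $(z+u)^{-k}$ and $(vz-1)^{-k}$ used when substituting $w\mapsto-1/u$ are consistent with the principal branches appearing in the definitions of $F_{f,2}$, $\widetilde r_{f,2}$ and $r_{f,2}$. Convergence of all integrals near $i\infty$ is ensured by the cuspidal decay of $f$ (hence of $F_f$ and $r_f|_{\text{along }i\infty}$), and the holomorphy of the integrands in the relevant regions of $\HH$ justifies the contour deformations used in the split $\int_{-\overline z}^0=\int_{-\overline z}^{i\infty}-\int_0^{i\infty}$.
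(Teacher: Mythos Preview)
Your argument is correct and follows essentially the same route as the paper: apply the substitution $w\mapsto -1/w$ to $F_{f,2}|_kS$, use $F_f|_{2-k}(1-S)=r_f$ from Lemma~\ref{r_f}, and identify the resulting pieces as $F_{f,2}$, $\widetilde r_{f,2}$, and $r_{f,2}$. The only difference is that the paper imports the intermediate identity $F_{f,2}|_k(S-1)=r_{f,2}+\bigl(\int_{-\overline z}^{0}\tfrac{r_f(w)}{(w+z)^k}\,dw\bigr)|_kS$ from \cite{CD} and then simplifies the last term via $r_f\in W_{k-2}$, whereas you carry out the whole computation from scratch.
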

\begin{proof}
>From the proof of Theorem  3  of \cite{CD}, it follows  that
$$
\left.F_{f, 2}(z)\right|_k(S-1)= r_{f, 2}(z) +
\left.\left ( \int_{-\overline z}^{0}
\frac{r_f(w)}{(w+z)^k}dw \right ) \right|_k S.
$$
\rm
The last term may now easily be  simplified  using that
$r_f \in W_{k-2}$.
\end{proof}
The correction term
$\widetilde r_{f, 2}$ may be explicitly expressed in terms of {\emph critical} values, and it does not affect the analogy with the relation between
$F_f$ and $r_f$.
\begin{lemma} \label{tilde2rf}
For all $f \in S_k$,
\begin{equation*} \label{tilde2rfform}
\widetilde r_{f, 2}(z)=
-(k-2)! \sum_{n=0}^{k-2}\sum_{\ell=0}^{k-2-n}
\frac{L_f(n+1)}{\ell! (k-2-n-\ell)!(1+n+\ell)}(-4 \pi i z)^{\ell} (-4 \pi
y)^{-1-n-\ell}.
\end{equation*}
\end{lemma}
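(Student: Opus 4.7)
The proof is essentially a direct computation: unfold the definition of $\widetilde r_{f,2}(z)$ by substituting the explicit polynomial expression for $r_f$ from Lemma \ref{r_f}, reduce to a family of elementary integrals, and compare. The plan is as follows.

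First, I would insert the formula
\[
r_f(w)= -\frac{(k-2)!}{(2\pi i)^{k-1}}\sum_{n=0}^{k-2}\frac{L_f(n+1)}{(k-2-n)!}(2\pi i w)^{k-2-n}
\]
from Lemma \ref{r_f} into the definition $\widetilde r_{f,2}(z)=\int_{-\overline z}^{i\infty}\frac{r_f(w)}{(w+z)^k}\,dw$, pull the finite sum outside the integral, and simplify the constants to obtain
\[
\widetilde r_{f,2}(z)
= -(k-2)!\sum_{n=0}^{k-2}\frac{L_f(n+1)}{(2\pi i)^{1+n}(k-2-n)!}\,
\int_{-\overline z}^{i\infty}\frac{w^{k-2-n}}{(w+z)^k}\,dw.
\]
This reduces the problem to evaluating one integral per $n$.

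Next, I would make the substitution $u=w+z$, which is the natural move since the denominator becomes $u^k$. The crucial observation is that the lower limit transforms as $w=-\overline z \mapsto u=z-\overline z=2iy$, which is purely imaginary and which will be the source of the $y$-dependence in the final answer. The integral becomes $\int_{2iy}^{i\infty}\frac{(u-z)^{k-2-n}}{u^k}\,du$. Expanding $(u-z)^{k-2-n}$ by the binomial theorem produces the double sum in $n$ and $\ell$ asserted in the lemma, and each resulting term is $\int_{2iy}^{i\infty}u^{-2-n-\ell}\,du$. Since $-2-n-\ell \le -2$, this integral is absolutely convergent and equal to $\tfrac{(2iy)^{-1-n-\ell}}{1+n+\ell}$ by the fundamental theorem of calculus. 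Combining, one gets
\[
\int_{-\overline z}^{i\infty}\frac{w^{k-2-n}}{(w+z)^k}\,dw
=\sum_{\ell=0}^{k-2-n}\binom{k-2-n}{\ell}\,\frac{(-z)^{\ell}(2iy)^{-1-n-\ell}}{1+n+\ell}.
\]

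Finally, I would substitute this expression back, expand the binomial coefficient as $(k-2-n)!/(\ell!(k-2-n-\ell)!)$, cancel the $(k-2-n)!$ against the denominator outside, and regroup the constants using the identities $(2iy)\cdot(2\pi i)=-4\pi y$ and $(-z)\cdot(2\pi i)=-2\pi i z$ to match the form stated in the lemma. No step is truly hard; the only real obstacle is careful bookkeeping of the signs and powers of $i$ (and verifying the stated normalization of the factor $(-4\pi i z)^{\ell}(-4\pi y)^{-1-n-\ell}$). Absolute convergence of both the integral and the resulting finite sum is automatic since $f\in S_k$ and the integrand decays rapidly at $i\infty$, so Fubini-type interchanges need no further justification.
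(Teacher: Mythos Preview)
Your proposal is correct and follows essentially the same route as the paper's own proof: insert the explicit formula for $r_f$ from Lemma~\ref{r_f}, substitute $u=w+z$ (the paper writes this as $w\to w-z$), expand with the Binomial Theorem, and evaluate the resulting elementary integrals $\int_{2iy}^{i\infty}u^{-2-n-\ell}\,du$. Your cautionary remark about verifying the constant normalization is well taken, since the paper itself simply writes ``This implies the result'' after obtaining the same binomial expansion you display.
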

\begin{remark} We note that all of the exponents of $y$ are
negative, thus $\widetilde r_{f, 2}$ is a purely non-holomorphic
function.
\end{remark}
\begin{proof}
>From Lemma \ref{r_f},
\begin{align*}
\int_{-\overline z}^{i\infty} \frac{r_f(w)}{(w+z)^k}dw
&=
 (k-2)!
\sum_{n=0}^{k-2} i^{-n+1}
\frac{L_f(n+1)}{(2 \pi)^{n+1}(k-2-n)!}   \int_{-\overline z}^{i\infty} \frac{w^{k-2-n}}{(w+z)^k}dw.
 \end{align*}
Making the change of variable $w \to w-z$ and then using the Binomial
Theorem, we obtain that
 the  integral equals
 \begin{equation*}
   \sum_{\ell=0}^{k-2-n} \binom{k-2-n}{\ell}(-z)^\ell
\frac{(2iy)^{-1-n-\ell}}{1+n+\ell}.
\end{equation*}
This implies the result.
\end{proof}
Because of Lemma \ref{tilde2rf}, it is natural to complete $r_{f,2}$ by
substracting this ``lower-order" non-holomorphic function to
obtain
$$
\widehat r_{f, 2}:=r_{f, 2}-\widetilde r_{f, 2}.
$$ Lemma
\ref{r_{f,2}} and Proposition
\ref{SuperM} suggest, by comparison with Lemma \ref{r_f}, that
$\widehat
r_{f, 2}$ can be viewed as an analogue of the period polynomial
associated to non-critical values.
In the next section, we will show that this interpretation can be
formalized  in a way that justifies the name  \textit{mock
period function} for $r_{f, 2}$.

\section{Mock period functions}
One of the reasons that the theory of periods has been so successful in
proving important results about the values of $L$-functions is that they
satisfy relations that allow us to view them as elements of a space with a
rational structure. This space is, in effect, the first cohomology
group of Eichler cohomology.  However, to make the relation with
$L$-functions more immediate we will use the more
concrete formulation and notation of
\cite{KoZ}.
In the last section, we will give a cohomological interpretation of our
results.

For  $n \in \mathbb N$, let $V_n$ denote the space of polynomials of degree at
most $n$ acted upon by $|_{-n}$, and set
$$
W_n:=\left\{P \in V_n; P|_{-n}(1+S)=P|_{-n}\left(1+U+U^2\right)=0
\right\}.
$$
The period polynomial $r_f$ associated to  $f \in S_k$ belongs to
$W_{k-2}$ (cf. \cite{KoZ}).
According to the well-known Eichler-Shimura Isomorphism (cf. \cite{KoZ}
and the references therein), the polynomials
characterize the entire space.
\begin{theorem}\label{ES} (Eichler-Shimura Isomorphism) Let $k$ be an
even positive integer. Then for each $P \in W_{k-2}$ there exists a
unique
pair $(f, g) \in S_k \times S_k$ and  $c \in \C$ such that
$$
P(z)=r_f(z)+r_g(-z)+c\left(z^{k-2}-1\right).
$$
\end{theorem}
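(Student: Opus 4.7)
The plan is to show the linear map
\[
\Phi\colon S_k \oplus S_k \oplus \C \longrightarrow W_{k-2},
\qquad
(f, g, c) \longmapsto r_f + r_g(-\,\cdot\,) + c(z^{k-2}-1)
\]
is a bijection.

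First I would verify that $\Phi$ takes values in $W_{k-2}$. That $r_f \in W_{k-2}$ is classical: by Lemma~\ref{r_f} we have $r_f = F_f|_{2-k}(1-S)$, and combining with $S^2 = -I$ and $U^3 = -I$ (both acting trivially in weight $2-k$ for even $k$) gives the two defining relations. Since $k$ is even so $(-z)^{k-2} = z^{k-2}$, substituting $z \mapsto -z$ in $r_g|_{2-k}(1+S) = 0$ immediately yields $r_g(-z)|_{2-k}(1+S) = 0$, and a similar direct calculation gives the $(1+U+U^2)$ relation. Finally, $z^{k-2}-1 = 1|_{2-k}(S-I)$ is a coboundary and lies in $W_{k-2}$ automatically.

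Next, I would establish injectivity of $\Phi$. The fundamental input is the classical injectivity of the period map $f \mapsto r_f$ on $S_k$: if $r_f = 0$, then by Lemma~\ref{r_f} the Eichler integral $F_f$ is $S$-invariant, and being manifestly $T$-invariant (as $f$ has period $1$), it descends to a holomorphic $\Gamma$-modular form of weight $2-k \le 0$ decaying at $\infty$, which must vanish, forcing $f = 0$. To upgrade this to injectivity of $\Phi$, I would invoke \emph{Haberland's formula}, which provides a non-degenerate bilinear pairing on $W_{k-2}$ whose restriction to the image of the period map computes (up to an explicit constant) the Petersson inner product on $S_k$. Pairing $\Phi(f, g, c) = 0$ against $r_h$ for varying $h \in S_k$, and combining with a parity decomposition (meaningful because $(-z)^{k-2} = z^{k-2}$ for $k$ even), forces $f = g = 0$ and then $c = 0$.

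Finally, a direct linear-algebra computation using the explicit action of $S$ and $U$ on $V_{k-2}$ yields $\dim W_{k-2} = 2\dim S_k + 1$ for even $k \ge 4$, matching $\dim(S_k \oplus S_k \oplus \C)$; combined with injectivity, this forces $\Phi$ to be an isomorphism. The main obstacle is the injectivity step---concretely, cleanly separating the three summands $r_f$, $r_g(-z)$, and $z^{k-2}-1$---which is the substantive content of the theorem. The Haberland pairing provides the efficient handle; alternatively one could identify $W_{k-2}$ with the first group-cohomology space $H^1(\Gamma, V_{k-2})$ and invoke the Eichler-Shimura decomposition into a holomorphic cusp-form part, an antiholomorphic cusp-form part (realized here by $g \mapsto r_g(-z)$), and a one-dimensional Eisenstein direction.
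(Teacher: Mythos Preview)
The paper does not prove Theorem~\ref{ES}; it is stated as the classical Eichler--Shimura Isomorphism with a citation to \cite{KoZ} and no argument is given. So there is no ``paper's own proof'' to compare against, and your proposal is supplying something the paper deliberately outsources.

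Your outline follows the standard route (essentially that of \cite{KoZ}): verify that $\Phi$ lands in $W_{k-2}$, prove injectivity, and match dimensions. The first step is fine, and your argument that $r_f=0$ forces $f=0$ via the modularity of $F_f$ in weight $2-k$ is correct. Two places deserve more care. First, the injectivity of the full map $\Phi$ is the heart of the theorem, and your appeal to ``Haberland's formula plus a parity decomposition'' is more of a pointer than a proof: one really needs that the pairing of $r_f^{\pm}$ against $r_h^{\mp}$ recovers the Petersson product, and that the coboundary $z^{k-2}-1$ lies in the radical, to disentangle the three summands. That is exactly what is done in \cite{KoZ}, so you should cite it or spell it out. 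Second, the claim that $\dim W_{k-2}=2\dim S_k+1$ follows from ``a direct linear-algebra computation'' understates the work: one can do it by hand, but the clean argument goes through identifying $W_{k-2}$ with parabolic cohomology and invoking the cohomological dimension formula (or Riemann--Roch), not by brute force on the $S$ and $U$ relations. Also note that for $k=2$ the term $z^{k-2}-1$ vanishes identically, so $c$ is not unique; the theorem as stated tacitly assumes $k\ge 4$, which you correctly restrict to in the dimension count but should flag explicitly.
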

\begin{remark} Usually, the second term is written as
$\overline {r_g(\bar z)}$, that is
the polynomial obtained by replacing each coefficient of  the
polynomial  $r_g$ with its
conjugate. However, this may be rewritten as
\begin{equation} \label{conj}
\overline {r_g(\overline z)} =
\int_0^{i\infty}\overline{g(w)}(\overline{w} - z)^{k-2} d \overline{w}
=-
\int_0^{i\infty}\overline{g(-\overline{w})}(-w - z)^{k-2}
dw=-r_{g^c}(-z).
\end{equation}
Recall that  $g^c(z):=\overline{g(-\overline{z})} \in S_k$.
\end{remark}

We will show that there is a space similar to $W_{k-2}$
within which the
completed period-like functions $\widehat r_{f, 2}$
live.
 We first
recall the operator $\xi_k:=2iy^k\frac{\overline{d}}{d\overline{z}}$
 ($y:=$Im$(z)$).
This map satisfies $\xi_k(f|_k \g)=(\xi_k f)|_{2-k} \g$ for all
$\g \in \G$, and thus maps weight $k$ automorphic objects to
weight $2-k$ automorphic objects.
We then set
$$
W_{k, 2}:=
\left\{\mathcal{P}: \mathfrak H \to \C; \xi_k(\mathcal{P}) \in V_{k-2};
\mathcal{P}|_k\left(1+S\right)=\mathcal{P}|_k\left(1+U+U^2\right)=0 \right\}.
$$
This space consists not of polynomials but of functions which
become polynomials only after application of the
$\xi_k$-operator.

The next theorem explains in what sense $r_{k, 2}$ can be
considered a mock period function.
\begin{theorem} \label{W_k,2} Let $k \in 2 \mathbb N$ and $f
\in S_k$.
Then the function $\widehat r_{f, 2}$ is an element of
$W_{k, 2}$.
\end{theorem}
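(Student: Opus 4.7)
The plan is to verify the three conditions defining $W_{k,2}$: (a) $\xi_k(\widehat r_{f,2}) \in V_{k-2}$, (b) $\widehat r_{f,2}|_k(1+S)=0$, and (c) $\widehat r_{f,2}|_k(1+U+U^2)=0$. Conditions (b) and (c) will be deduced from Proposition \ref{SuperM}, which exhibits $\widehat r_{f,2}$ as the coboundary $F_{f,2}|_k(S-1)$; the $\xi_k$-condition will be handled by a short direct computation of $\partial\widetilde r_{f,2}/\partial\overline z$.

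For the two functional equations, I would introduce the natural $1$-cocycle $\psi_\g := F_{f,2}|_k(\g-1)$ on $\G$, which automatically satisfies $\psi_{\g_1\g_2} = \psi_{\g_1}|_k \g_2 + \psi_{\g_2}$. The first task is to check $\psi_T = 0$, i.e.\ $F_{f,2}|_k T = F_{f,2}$: substituting $w\mapsto w-1$ in the integral defining $F_{f,2}(z+1)$, the periodicity $F_f(w-1)=F_f(w)$ inherited from that of $f$ reduces matters to a standard contour shift, legitimate because the integrand is holomorphic in the upper half plane and $F_f$ has cuspidal decay at $i\infty$. Since $U=TS$ with $T = \left(\begin{smallmatrix}1 & 1\\0 & 1\end{smallmatrix}\right)$, the cocycle relation yields $\psi_U = \psi_T|_k S + \psi_S = \psi_S = \widehat r_{f,2}$. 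Because $k$ is even, the $|_k$-action factors through $\mathrm{PSL}_2(\Z)$, so the relations $S^2=U^3=-I$ in $\mathrm{SL}_2(\Z)$ give $\psi_{S^2}=\psi_{U^3}=0$; expanding these via the cocycle rule delivers exactly $\widehat r_{f,2}|_k(1+S)=0$ and $\widehat r_{f,2}|_k(1+U+U^2)=0$.

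For the $\xi_k$-condition, the function $r_{f,2}(z)$ is visibly holomorphic in $z$ (neither the integrand nor the endpoints involve $\overline z$), so $\xi_k(r_{f,2})=0$ and it suffices to compute $\xi_k(\widetilde r_{f,2})$. Only the lower limit $-\overline z$ of $\widetilde r_{f,2}$ depends on $\overline z$, so a single application of the Leibniz rule gives $\partial\widetilde r_{f,2}/\partial\overline z = r_f(-\overline z)/(2iy)^k$. Inserting this into $\xi_k = 2iy^k\,\overline{\partial/\partial\overline z}$ and using $(-2i)^k=(2i)^k$ for even $k$ yields $\xi_k(\widetilde r_{f,2}) = (2i)^{1-k}\,\overline{r_f(-\overline z)}$, which is a polynomial in $z$ of degree at most $k-2$ since $r_f\in V_{k-2}$. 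Hence $\xi_k(\widehat r_{f,2}) = -\xi_k(\widetilde r_{f,2}) \in V_{k-2}$.

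The only genuinely delicate step is the contour manipulation needed for $\psi_T=0$, where one must justify deforming the endpoint from $i\infty+1$ back to $i\infty$ after the substitution; this is immediate from the exponential decay of $F_f$ at the cusp. Beyond this, everything is mechanical: the functional equations become purely group-theoretic consequences of Proposition \ref{SuperM} and the standard presentation $S^2=U^3=-I$, while the $\xi_k$-condition reduces to a one-line boundary-term calculation.
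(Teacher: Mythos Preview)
Your proposal is correct and follows essentially the same approach as the paper: both deduce the Eichler--Shimura relations from Proposition~\ref{SuperM} together with the $T$-invariance of $F_{f,2}$ and the relations $S^2=U^3=1$ in $\mathrm{PSL}_2(\Z)$, and both compute $\xi_k(\widehat r_{f,2})$ by differentiating the lower limit of $\widetilde r_{f,2}$. Your cocycle phrasing is slightly more explicit, but the substance is identical.
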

\begin{proof} The first condition follows from the identity
\begin{equation}\label{*}
\xi_k\Big(\widehat{r}_{f, 2}(z)\Big)=-2i y^k\overline{\frac{d}{d\overline{z}}\int_{-\overline{z}}^{i \infty } \frac{r_f(w)}{(w+z)^k}\
dw}
=(2i)^{1-k}r_{f^c}(z)\in V_{k-2},
\end{equation}
where for the last equality we used  (\ref{conj}).
The relation
$$
\left.\widehat{r}_{f, 2} \right|_k(1+S)=0
$$
follows directly from the identity in Proposition \ref{SuperM}.

To deduce the relation for $U$ we first note that
$F_{f, 2}|_k T  =F_{f, 2}$, which follows directly from
 $f(w+1)=f(w)$.
 Thus
$$
\left.
F_{f, 2}\right|_k(1-S)=\left.F_{f, 2}\right|_k(1-TS)=\left.F_{f, 2}\right|_k(1-U)
$$
and the claim follows from $U^3=1$.
\end{proof}
\begin{remark}\label{harmo}
It is immediate that, if $\xi_k(\mathcal{P}) \in
V_{k-2}$, then $\Delta_k(\mathcal{P})=-\xi_{2-k} \circ \xi_k(\mathcal{P})=0$, and thus
Theorem \ref{W_k,2} implies that $\widehat{r}_{f, 2}$ is harmonic.
\end{remark}

This theorem suggests the name \textit{mock period function} for
$r_{f, 2}$ as well as the more general
\noindent
\begin{definition} \label{definemock}
A holomorphic function $p_2: \HH\to\C$ is called a
\textit{mock
period function} if there exists a
 $\widetilde{p}_2 \in \oplus_{j=1}^{k-1}y^{-j}V_{k-2}$
 such
that
\[ p_2+\widetilde{p}_2\in W_{k, 2}.\]
\end{definition}

The Eichler-Shimura relations for $\widehat r_{f, 2}$ proved in Theorem
\ref{W_k,2}
are reflected in \textit{mock} Eichler-Shimura relations for
$r_{f, 2}$.

\begin{theorem}\label{mockperiod}
We have
\begin{align*}
r_{f, 2}(z)\Big|_k (1+S) &=\int_0^{i\infty}\frac{r_f(w)}{(w+z)^k}\ dw,\\
r_{f, 2}(z)\Big|_k \Big(1+U+U^2\Big)&=\int_{-1}^{i\infty}\frac{r_f(w)}{(w+z)^k}\
dw  +  \int_{-1}^{0}\frac{r_f|_{2-k}\widetilde{U}(w)}{(w+z)^k}\ dw
\end{align*}
with
$\widetilde{U}:=\left(\begin{smallmatrix}
-1 & -1 \\ 1 & 0\end{smallmatrix}\right)= SU^2S^{-1}$.
\end{theorem}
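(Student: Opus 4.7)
The plan is to leverage Theorem~\ref{W_k,2}: since $\widehat r_{f,2}=r_{f,2}-\widetilde r_{f,2}$ lies in $W_{k,2}$, both $\widehat r_{f,2}|_k(1+S)$ and $\widehat r_{f,2}|_k(1+U+U^2)$ vanish, so the two identities reduce to evaluating $\widetilde r_{f,2}|_k\gamma$ for $\gamma\in\{S,U,U^2\}$ as explicit integrals of the form $\int (\cdots)/(u+z)^k\,du$. I would obtain each such formula by performing a M\"obius change of variables in the $w$-integral defining $\widetilde r_{f,2}(\gamma z)$.

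For each $\gamma=\left(\begin{smallmatrix}a&b\\c&d\end{smallmatrix}\right)$, the template is: write out $(cz+d)^{-k}\widetilde r_{f,2}(\gamma z)$, clear the factor $(cz+d)^{-k}$ against $(w+\gamma z)^k$, and then substitute $w\mapsto \alpha u$ for an $\alpha\in\mathrm{SL}_2(\Z)$ chosen so that the new denominator is a scalar multiple of $(u+z)^k$; finally use $r_f|_{2-k}(1+S)=0$ to re-express $r_f(\alpha u)$ cleanly. Concretely, the substitutions $w=-1/u$, $w=\widetilde U u=-1-1/u$, and $w=-1/(u+1)$ should produce
\begin{align*}
\widetilde r_{f,2}|_k S(z) &= \int_0^{-\bar z}\frac{r_f(u)}{(u+z)^k}\,du,\\
\widetilde r_{f,2}|_k U(z) &= \int_{-\bar z}^{0}\frac{r_f|_{2-k}\widetilde U(u)}{(u+z)^k}\,du,\\
\widetilde r_{f,2}|_k U^2(z) &= \int_{-1}^{-\bar z}\frac{r_f(u+1)}{(u+z)^k}\,du.
\end{align*}
In each case the non-holomorphic endpoint $-\bar z$ appears as a new limit of integration in exactly the way needed for it to cancel when the pieces are combined with $\widetilde r_{f,2}(z)=\int_{-\bar z}^{i\infty}r_f(w)/(w+z)^k\,dw$.

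The first identity then follows at once from $\int_{-\bar z}^{i\infty}+\int_0^{-\bar z}=\int_0^{i\infty}$. For the second, adding $\widetilde r_{f,2}(z)$ to the $U$- and $U^2$-pieces and subtracting the right-hand side of the claim leaves, after the $-\bar z$ contributions cancel, the single integral
\[
\int_{-1}^{-\bar z}\frac{r_f(u+1)-r_f(u)-r_f|_{2-k}\widetilde U(u)}{(u+z)^k}\,du.
\]
So the proof reduces to the polynomial identity $r_f(u+1)-r_f(u)=r_f|_{2-k}\widetilde U(u)$. To establish it, evaluate $r_f|_{2-k}(1+U+U^2)=0$ at the argument $u+1$; since $U\cdot(u+1)=u/(u+1)$ and $U^2\cdot(u+1)=-1/u$, the relation reads
\[
r_f(u+1)+r_f(u/(u+1))(u+1)^{k-2}+r_f(-1/u)\,u^{k-2}=0,
\]
and two applications of $r_f|_{2-k}(1+S)=0$ (converting $r_f(-1/u)u^{k-2}=-r_f(u)$ and $r_f(u/(u+1))(u+1)^{k-2}=-r_f|_{2-k}\widetilde U(u)$, the latter after using $-1/(u/(u+1))=-(u+1)/u=\widetilde U u$) reduce the equation to the desired form.

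The principal obstacle is computational bookkeeping: one must track how the endpoints of each contour transform under the M\"obius substitutions, and verify that the $(cu+d)$-factors generated by the slash action combine with $(-1)^k=1$ (available because $k$ is even) to leave behind precisely the clean integrand $(\text{polynomial in }u)/(u+z)^k$ at every step. Once the three integral formulas above are pinned down correctly, the rest of the argument is the short algebraic identity just described.
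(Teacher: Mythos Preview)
Your proposal is correct and follows essentially the same route as the paper: reduce to $\widetilde r_{f,2}$ via Theorem~\ref{W_k,2}, compute $\widetilde r_{f,2}|_k\gamma$ for $\gamma\in\{S,U,U^2\}$ by the M\"obius substitutions $w\mapsto Sw,\ \widetilde U w,\ \widetilde U^2 w$, and finish with the period relations for $r_f$. The only cosmetic difference is that the paper keeps the $U^2$-integrand as $r_f|_{2-k}\widetilde U^2$ and cancels the $-\bar z$ contribution by invoking $r_f|_{2-k}(1+\widetilde U+\widetilde U^2)=0$ directly, whereas you first rewrite $r_f|_{2-k}\widetilde U^2(u)=-r_f(u+1)$ and then use the equivalent polynomial identity $r_f(u+1)-r_f(u)=r_f|_{2-k}\widetilde U(u)$; these are the same relation in disguise.
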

\begin{proof}
By \eqref{SuperMformula} and Theorem \ref{W_k,2} it
suffices to consider the action of $1+S$ and $1+U+U^2$
on $\widetilde{r}_{f, 2}$ only. Further, since $r_f\in W_{k-2}$,
we have
\begin{equation} \label{periodrel}
r_f\Big|_{2-k}(1+S)=r_f\Big|_{2-k}\Big(1+U+U^2\Big)=0.
\end{equation}
For the first identity we have by (\ref{periodrel})
\begin{align*}
\widetilde{r}_{f, 2}(z)\Big|_k S
&=z^{-k}\int_{\frac{1}{\overline{z}}}^{i\infty}\frac{r_f(w)}{\left(w-\frac{1}{z}\right)^k}\ dw
 \\
&=\left(\int_{-\overline{z}}^{i\infty}-\int_0^{i\infty}\right)\frac{r_f|_{2-k}S(w)}{(w+z)^k}\ dw
=-\widetilde{r}_{f, 2}(z)+\int_0^{i\infty}\frac{r_f(w)}{(w+z)^k}\ dw.
\end{align*}
To prove the second identity, we observe that (\ref{periodrel}) implies that
\begin{equation} \label{periodrel2}
r_f\Big|_{2-k}\Big(1+\widetilde{U}+\widetilde{U}^2
\Big)= 0.
\end{equation}
The change of variables $w \to \widetilde{U}w$
gives
\[
\widetilde{r}_{f, 2}(z)\Big|_k U=
\int_{-\overline{z}}^0\frac{r_f|_{2-k}\widetilde{U}(w)}{(z+w)^k}\
dw.
\]
Likewise, the change of variables $w \to \widetilde{U}^2 w$
yields
\[
\widetilde{r}_{f, 2}(z)\Big|_k U^2=
\int_{-\overline{z}}^{-1}\frac{r_f|_{2-k}\widetilde{U}^2(w)}{(w+z)^k}\ dw.
\]
Thus
\begin{multline*}
\widetilde{r}_{f, 2}(z)\Big|_k\Big(1+U+U^2\Big)
 =\int_{-\overline{z}}^{i\infty}\frac{r_f|_{2-k}
\left(1+\widetilde{U}+\widetilde{U}^2\right)(w)}{(w+z)^k}\ dw \\
-\int_0^{i\infty}\frac{r_f|_{2-k} \widetilde{U}(w)}{(z+w)^k}\ dw
-\int_{-1}^{i\infty}\frac{r_f|_{2-k} \widetilde{U}^2(w)}{(w+z)^k} dw.
\end{multline*}
Applying (\ref{periodrel2}) we obtain
the claim.
\end{proof}

\section{Sesquiharmonic Maass forms}\label{Sesquisection}\label{section3}
In this section, we introduce
new automorphic objects related
to non-critical values of $L$-functions.
\begin{definition}\label{sesquiharm}
A real-analytic function $\mathcal{F}: \HH\to\C$ is called a \textit{sesquiharmonic
Maass form of weight $k$} if the following conditions are satisfied:
\begin{enumerate}
\item[i)] We have for all $\gamma\in \G$ that
$\mathcal{F}|_k \gamma=\mathcal{F}$.
\item[ii)] We have that
$\Delta_{k, 2}\left(\mathcal{F}\right)=0$.
\item[iii)] The function
$\mathcal{F}$ has at most linear exponential growth at infinity.
\end{enumerate}
\end{definition}

\noindent
We denote the space of such functions by $H_{k, 2}$. The subspace
of
harmonic weak Maass forms, i.e., these sesquiharmonic forms $\mathcal{F}$ that satisfy
\[
\Delta_k(\mathcal{F})=-\xi_{2-k}\circ \xi_k(\mathcal{F})=0
\]
is denoted by $H_k$. Our definition in particular implies that
\[
\xi_k\left(H_{k, 2}\right)\subset H_{2-k}.
\]
 The holomorphic differential $D := \frac{1}{2 \pi i} \frac{d}{d z}$ plays a role originating in Bol's identity.  It is well-known that (see \cite{BF})
$$
\xi_{2-k}\left(H_{2-k}\right) \subset M_k^!, \qquad
D^{k-1}\left(H_{2-k}\right) \subset M_k^!.
$$
Here, $M_k^!$ denotes the space of weakly holomorphic modular
form, i.e., those meromorphic  modular forms whose poles may only lie at
the cusps. This suggests the following distinguished subspaces.
\begin{definition} \label{H^+} For $k \in 2 \mathbb N$, set
\begin{enumerate}
\item[i)] $H_{2-k}^+:=\{f \in H_{2-k}; D^{k-1}(f) \in S_k\}$ and
$H_{2-k}^-:=\{f \in H_{2-k}; \xi_{2-k}(f) \in S_k\}$,
\item[ii)] $H_{k, 2}^+:=\{f \in H_{k, 2}; \xi_{k}(f) \in H^+_{2-k}\}$.
\end{enumerate}
\end{definition}
Employing the theory of Poincar\'e   series, we will prove
that the restriction of $\xi_k$ on  $H_{k, 2}^{+}$ surjects onto
$H_{2-k}^+.$
In general, for functions $\varphi$ that are translation invariant,
we define the following Poincar\'e series
\begin{equation} \label{generic}
\mathcal{P}_k(\varphi; z):=\sum_{\gamma\in\Gamma_\infty\setminus
\G }\varphi\Big|_k\gamma(z)
\end{equation}
whenever this series converges absolutely.
 Here, $\G_{\infty}$ is the
set of translations in $\G$. For $k >2$, the classical
Poincar\'e series, spanning $S_k$ for $m>0$, are
in this notation
\[
P_k(m; z):=\mathcal{P}_k\left(q^m; z\right).
\]
 For all $m \in \mathbb Z \setminus\{0\}$, the \textit{Maass Poincar\'e
series} are defined by \cite{He}
\[
\field{P}_{k}(m, s; z):=\mathcal{P}_k\left(\varphi_{m, s}; z\right)
\]
with
\[
\varphi_{m, s}(z):=\mathcal{M}_s^k(4\pi my)e(mx),
\]
Here, $e(x):=e^{2 \pi i x}$ and
\[
\mathcal{M}_s^k(u):=|u|^{-\frac{k}{2}}M_{\sgn(u)\frac{k}{2},
s-\frac12}\big(|u|\big),
\]
where $M_{\nu,\mu}$ is the usual $M$-Whittaker function
 with the  integral representation
\begin{equation}\label{Mwhit}
M_{\mu,
\nu}(y)=y^{\nu+\frac12}e^{\frac{y}{2}}\frac{\Gamma(1+2\nu)}{\Gamma\left(\nu+\mu+\frac12\right)
\Gamma\left(\nu-\mu+\frac12\right)}\int_0^1t^{\nu+\mu-\frac12}
(1-t)^{\nu-\mu-\frac12}e^{-yt}\ dt
\end{equation}
 for $\text{Re}\left(\nu\pm
\mu+\frac12\right)>0$.
Using that  as $y \to 0$
\begin{eqnarray} \label{Mbound}
\mathcal{M}_s^k(y)= O \left(y^{\text{Re}(s)-\frac{k}{2}} \right),
\end{eqnarray}
 we see that
the series $\field{P}_{k}(m, s; z)$ converges absolutely for
Re$(s)>1$  and  satisfies
\begin{equation}\label{Laplace}
\Delta_k\left(\field{P}_{k}(m, s; z) \right)
= \left(s(1-s)+\frac14\left(k^2-2k \right)\right)\field{P}_{k}(m, s; z).
\end{equation}
In particular, the Poincar\'e series is annihilated for $s= \frac{k}{2}$ or
$s=1-\frac{k}{2}$ (depending on the range of absolute
convergence).
Moreover, for $m>0$ and $k\geq 2$,
we have
\begin{equation}\label{xipoincare}
D^{k-1}\left(\field{P}_{2-k}\left(m, \frac{k}{2} ; z\right)\right)=
-(k-1)!m^{k-1} P_k(m; z)
\end{equation}
(see, e.g. \cite{BKR}) and
\begin{equation}\label{dipoincare}
\xi_{2-k} \left(\field{P}_{2-k}\left(-m, \frac{k}{2} ; z\right)\right)=
(k-1)(4 \pi m)^{k-1} P_k(m; z)
\end{equation}
(see, e.g.  Theorem 1.1 (2) of \cite{BO4}).
This implies
$$\field{P}_{2-k}\left(m, \frac{k}{2}; z\right) \in H_{2-k}^+, \qquad
\field{P}_{2-k}\left(-m, \frac{k}{2}; z\right) \in H_{2-k}^-.
$$
In fact, the Poincar\'e series span the  respective spaces
$H_{2-k}^+$ and
$H_{2-k}^-$.
For the space $H_k^-$ this follows from Remark 3.10 of \cite{BF}. \rm
For the space $H_k^+$ one
may argue analogously
by using the flipping operator \cite{BKR}, which gives a bijection between
the two spaces.

For $k>0$, we then set
\[
\field{P}_{k, 2}(m; z):= \mathcal{P}_k\left(\psi_{m}; z\right)
\]
with \[
\psi_{m}(z):=\frac{d}{ds}\left[\mathcal{M}_s^k(4\pi my)\right]_{s=\frac{k}{2}} e(mx).
\]
Differentiation in $s$ only introduces logarithms
and thus,  using (\ref{Mbound}), we can easily see
that, for Re$(s)>1$ and for every $\epsilon>0$, the derivative is
$O(y^{\text{Re}(s)-\epsilon-k/2})$, and thus, as $y \to 0$, we find
$\psi_m(z)=O(y^{-\epsilon}).$
Thus for all nonzero integers $m$, and $k>0$,
$\field{P}_{k, 2}(m; z)$ is absolutely convergent.

One could further explicitly compute the Fourier expansion of
$\field{P}_{k, 2}$ but for the purposes of
 this paper,  this is not required.
\begin{theorem} \label{PoinTheorem}
For $m \in \mathbb N$, the function $\field{P}_{k, 2}(-m; z)$ is
an element of
$H_{k, 2}^{+}$ and satisfies:
\begin{eqnarray} \label{imagexi}
\xi_k\left(\field{P}_{k, 2}(-m; z)\right)
&=&(4\pi m)^{1-k}\field{P}_{2-k}\left(m, \frac{k}{2}; z\right),   \\\label{imageD}
D^{k-1}\circ \xi_k\left(\field{P}_{k, 2}(-m;
z)\right)&=&-(k-1)! (4 \pi )^{k-1} P_k(m; z).
\end{eqnarray}
In particular, the map
\[
\xi_k \text{: } H_{k, 2}^{+}\to H_{2-k}^{+}
\]
is surjective.
\end{theorem}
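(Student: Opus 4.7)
The plan is to push $\xi_k$ inside the Poincar\'e sum defining $\field{P}_{k,2}(-m;z)$, reduce (\ref{imagexi}) to a single pointwise identity on the seed $\psi_{-m}$, deduce (\ref{imageD}) from (\ref{imagexi}) and (\ref{xipoincare}), and finally obtain the membership and surjectivity statements as easy corollaries. The starting point is the intertwining $\xi_k(\mathcal{P}_k(\varphi;z))=\mathcal{P}_{2-k}(\xi_k(\varphi);z)$, valid whenever $\varphi$ is translation invariant and both sides converge absolutely and locally uniformly, which follows from $(\xi_k f)|_{2-k}\gamma=\xi_k(f|_k\gamma)$ applied term by term. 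The required absolute convergence is guaranteed by the growth bounds for $\psi_{-m}$ and $\varphi_{m,k/2}$ already recorded in the excerpt.

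Applied with $\varphi=\psi_{-m}$, this reduces (\ref{imagexi}) to the pointwise identity $\xi_k(\psi_{-m})=(4\pi m)^{1-k}\varphi_{m,k/2}$. Writing $\psi_{-m}(z)=g(y)e(-mx)$ with $g(y)=\frac{d}{ds}[\mathcal{M}_s^k(-4\pi m y)]_{s=k/2}$ and unwinding $\xi_k=2iy^k\overline{\partial/\partial\bar z}$, the identity to be proved becomes a closed-form expression for $-y^k\overline{(g'(y)+2\pi m\, g(y))}$. This is the delicate step: starting from the integral representation (\ref{Mwhit}) of $M_{-k/2,s-1/2}(4\pi my)$, I would differentiate in $s$ at $s=k/2$, where the normalising factor $\Gamma(s-k/2+1/2)^{-1}$ produces a regular but non-trivial contribution, and then use standard derivative and contiguous relations for confluent hypergeometric functions to collapse the combination into a single multiple of $\mathcal{M}_{k/2}^k(4\pi my)$. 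The clean constant $(4\pi m)^{1-k}$ is ultimately the outcome of a cancellation between the $s$-differentiation of the $\Gamma$-normalisation and the shift of the first Whittaker index from $-k/2$ to $k/2$ effected by $\xi_k$. Once (\ref{imagexi}) is secured, (\ref{imageD}) is immediate from (\ref{xipoincare}).

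To establish $\field{P}_{k,2}(-m;z)\in H_{k,2}^+$, I would verify the conditions of Definition \ref{sesquiharm} together with condition (ii) of Definition \ref{H^+} in turn. Modular invariance is built into the Poincar\'e construction; the growth at infinity is at most polynomial because the $s$-differentiation of $\mathcal{M}_s^k$ introduces only a logarithmic correction relative to (\ref{Mbound}) and the non-stabiliser cosets contribute only rapidly decaying terms; and $\Delta_{k,2}=\Delta_{2-k}\circ\xi_k$ annihilates $\field{P}_{k,2}(-m;z)$ because, via (\ref{imagexi}), this reduces to $\Delta_{2-k}\field{P}_{2-k}(m,k/2;z)=0$, which follows from (\ref{Laplace}) since the eigenvalue
\[
\tfrac{k}{2}\bigl(1-\tfrac{k}{2}\bigr)+\tfrac{1}{4}\bigl((2-k)^2-2(2-k)\bigr)=0.
\]
Condition (ii) of Definition \ref{H^+} requires $\xi_k(\field{P}_{k,2}(-m;z))\in H_{2-k}^+$; by (\ref{imagexi}) this is a scalar multiple of $\field{P}_{2-k}(m,k/2;z)$, which lies in $H_{2-k}^+$ as stated in the excerpt, and (\ref{imageD}) confirms directly that its $D^{k-1}$-image is a non-zero multiple of the cusp form $P_k(m;z)$. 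Surjectivity of $\xi_k\colon H_{k,2}^+\to H_{2-k}^+$ then follows at once because the excerpt records that $\{\field{P}_{2-k}(m,k/2;z):m\in\N\}$ spans $H_{2-k}^+$, and (\ref{imagexi}) supplies an explicit preimage in $H_{k,2}^+$ for each such generator.

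The main obstacle is unquestionably the Whittaker-function calculation in the second paragraph: although the resulting identity is clean and short, the bookkeeping with $\Gamma$-factors and with the derivative in $s$ at the special value $s=k/2$ is the subtlest point of the argument, and is where essentially the entire weight-$k$ content of the theorem resides. Every other step is formal once this identification is in place.
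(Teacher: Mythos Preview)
Your strategy is the paper's: push $\xi_k$ through the Poincar\'e sum, reduce (\ref{imagexi}) to a pointwise identity on the seed, verify it via the integral representation (\ref{Mwhit}) together with a contiguous relation (the paper invokes Slater (2.5.2)), and then read off (\ref{imageD}), annihilation by $\Delta_{k,2}$, and surjectivity exactly as you describe.

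Two points need correction. First, the target of the seed computation is $\mathcal{M}_{k/2}^{2-k}(4\pi my)$, not $\mathcal{M}_{k/2}^{k}(4\pi my)$: the image of $\xi_k$ lives in weight $2-k$, so the relevant first Whittaker index shifts from $-k/2$ to $1-k/2$, not to $k/2$. Second, your growth argument is wrong as stated. The estimate (\ref{Mbound}) is for $y\to 0$ and controls convergence of the Poincar\'e sum; it says nothing about $y\to\infty$. In fact $M_{-k/2,\mu}(y)$ grows like $e^{y/2}y^{k/2}$ at infinity, so the seed $\psi_{-m}$, and hence $\field{P}_{k,2}(-m;z)$, has genuinely linear exponential growth there, not polynomial. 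This is still within Definition~\ref{sesquiharm}, so the membership claim survives, but not by the reasoning you gave. (The non-identity cosets contribute an Eisenstein-type tail of polynomial size, not rapid decay.)
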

\begin{proof}
Due to the absolute convergence of the series, the
transformation law is satisfied by construction.

To verify the (at most) linear exponential growth
at infinity of $\field{P}_{k, 2}(m; z)$ we recall that $M_{\mu, \nu}$ has
at most linear exponential growth as $y \to \infty$ (cf. \cite{NIST},
(13.14.20)). We further note that this also holds for its derivative in
$s$ and thus $\psi_m(z)$ too, because differentiation in $s$ only
introduces logarithms. Therefore, since
Im$(\g y) \to 0$ as $y \to \infty$ whenever $\g \ne 1$, we have
\[
\field{P}_{k, 2}(m; z) \ll |\psi_m(z)|+y^{-\frac{k}{2}}\sum_{\g \in
\G_{\infty}
\backslash \G - \{1\}} \text{Im}(\g z)^{-\epsilon+\frac{k}{2}}.
\]
This together with the well-known polynomial growth of Eisenstein
series at the cusps implies the
claim.

To prove \eqref{imagexi} and \eqref{imageD}, and thus the annihilation
under
$\Delta_{k, 2}$, we first note that $\xi_k$ commutes with the group
action of $\G$ and therefore we only have to compute
\begin{align} \label{xiaction}
\nonumber&\quad
  \xi_k\left(\frac{d}{ds}\left[\mathcal{M}_s^k(-4\pi
my)e(-mx)\right]_{s=\frac{k}{2}}\right)
\\&=
  y^k(4\pi
m)\overline{q}^{-m}\frac{d}{ds}\left[\frac{d}{dy}\left[\mathcal{M}_{s+\frac{k}{2}}^k(-y)e^{-\frac{y}{2}}\right]_{y=4\pi
my}\right]_{s=0}.
\end{align}
 Notice that we do not need to conjugate the internal function because
upon
differentiation at $s=0$ we obtain a real function. \
The integral representation \eqref{Mwhit} implies for $y>0$
\[
\mathcal{M}_{s+\frac{k}{2}}^k(-y)
e^{-\frac{y}{2}}=\frac{y^s\Gamma(2s+k)}{\Gamma(s)\Gamma(s+k)}\int_0^1t^{s-1}(1-t)^{s+k-1}e^{-yt}\
dt
\]
which, in turn, gives that
\begin{align*}
&\quad
\frac{d}{dy}\left(\mathcal{M}_{s+\frac{k}{2}}^k(-y)
e^{-\frac{y}{2}}\right)
\\[4pt]&=
\frac{s}{y}\cdot
y^{-\frac{k}{2}}M_{-\frac{k}{2},
s+\frac{k}{2}-\frac12}(y)e^{-\frac{y}{2}}
-\frac{y^s\Gamma(2s+k)}{\Gamma(s)\Gamma(s+k)}
\int_0^1t^s(1-t)^{s+k-1}e^{-yt}\ dt
\\[4pt]&=s
y^{-\frac{k}{2}-1}M_{-\frac{k}{2},
s+\frac{k}{2}-\frac12}(y)e^{-\frac{y}{2}}
-\frac{s}{2s+k}  y^{-\frac{k}{2}-\frac12}
M_{\frac12-\frac{k}{2}, s+\frac{k}{2}}(y) e^{-\frac{y}{2}}.
\end{align*}
Differentiating with respect to $s$ and setting $s=0$ gives (\cite{Sl}, (2.5.2))
\[
y^{-\frac{k}{2}-1}e^{-\frac{y}{2}}\frac1k\left(kM_{-\frac{k}{2},
\frac{k}{2}-\frac12}(y)-\sqrt{y}M_{\frac12-\frac{k}{2},
\frac{k}{2}}(y)\right)
=y^{-\frac{k}{2}-1}e^{-\frac{y}{2}}M_{1-\frac{k}{2},
\frac{k}{2}-\frac12}(y)
=e^{-\frac{y}{2}}y^{-k}\mathcal{M}_{\frac{k}{2}}^{2-k}(y).
\]
Thus
\begin{equation*}
\xi_k\left(\frac{d}{ds}\left[\mathcal{M}_s^k(-4\pi
my)e(-mx)\right]_{s=\frac{k}{2}}\right)
 =(4\pi m)^{1-k}\mathcal{M}_{\frac{k}{2}}^{2-k}(4\pi my)e(mx),
\end{equation*}
which implies \eqref{imagexi}.
>From \eqref{imagexi} we may also deduce that $\Delta_{k, 2}\Big(
\field{P}_{k, 2}(m; z)\Big)=0.$
Equality \eqref{xipoincare} implies (\ref{imageD}).
Since, as mentioned above the functions $\field{P}_{2-k}(m, k/2; z)$
span $H^{+}_{2-k}$, \eqref{imagexi} implies the last assertion.
\end{proof}

Since we have a basis of $S_k$ consisting of Poincar\'e series,
Theorem \ref{PoinTheorem} implies
\begin{corollary}\label{surjective}
For $f\in S_k$ there exists $\mathcal{M}_{f,2}\in H_{k, 2}^{+}$ such
that
\[
D^{k-1}\circ \xi_k\left(\mathcal{M}_{f,2}\right)=f.
\]
\end{corollary}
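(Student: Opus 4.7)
The plan is to combine Theorem~\ref{PoinTheorem} with the classical fact that the space $S_k$ is spanned by the holomorphic Poincar\'e series $\{P_k(m;\cdot)\}_{m\geq 1}$ (the statement is vacuous when $k=2$, since $S_2=\{0\}$ for $\G=\mathrm{SL}_2(\Z)$, so I may assume $k\geq 4$). Given $f\in S_k$, I would first expand it as a finite linear combination $f=\sum_{m=1}^M c_m P_k(m;\cdot)$ with $c_m\in\C$.

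Guided by the normalisation in \eqref{imageD}, I would then define
\[
\mathcal{M}_{f,2}:=-\frac{1}{(k-1)!\,(4\pi)^{k-1}}\sum_{m=1}^{M}\overline{c_m}\,\field{P}_{k,2}(-m;\cdot).
\]
The conjugation of the coefficients is deliberate: since $\xi_k=2iy^k\overline{\partial_{\overline{z}}}$ is \emph{antilinear} over $\C$, distributing $\xi_k$ across the sum turns the $\overline{c_m}$ back into $c_m$. Applying the (linear) operator $D^{k-1}$ and invoking \eqref{imageD} term by term then produces precisely $\sum_{m=1}^{M} c_m P_k(m;\cdot)=f$, as required.

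It remains to verify the membership $\mathcal{M}_{f,2}\in H_{k,2}^{+}$. This is just a linearity check: each $\field{P}_{k,2}(-m;\cdot)$ lies in $H_{k,2}^{+}$ by Theorem~\ref{PoinTheorem}; the three defining conditions of $H_{k,2}$ (weight-$k$ modularity, annihilation by $\Delta_{k,2}$, and at most linear exponential growth) are all preserved under $\C$-linear combinations; and the additional condition $\xi_k(\mathcal{M}_{f,2})\in H_{2-k}^{+}$ also passes through sums, because $H_{2-k}^{+}$ is itself a $\C$-vector space and the antilinearity of $\xi_k$ merely permutes scalars within $H_{2-k}^{+}$.

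The hard analytic content is already packaged in Theorem~\ref{PoinTheorem}, so I do not foresee a genuine obstacle: the corollary is essentially a bookkeeping exercise. The only point requiring mild care is remembering the antilinearity of $\xi_k$, which forces one to insert complex conjugates on the coefficients $c_m$ when writing $\mathcal{M}_{f,2}$ as a combination of the $\field{P}_{k,2}(-m;\cdot)$.
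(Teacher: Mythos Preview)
Your proposal is correct and follows essentially the same approach as the paper, which simply remarks that the corollary follows from Theorem~\ref{PoinTheorem} together with the fact that the Poincar\'e series $P_k(m;\cdot)$ span $S_k$. You have fleshed out the details the paper leaves implicit, and your explicit handling of the antilinearity of $\xi_k$ (conjugating the coefficients $c_m$) is a useful clarification that the paper does not spell out.
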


To state and prove our second main theorem we analyze the Fourier
expansion of $\mathcal{F}$
in $H_{k, 2}^{+}$.
Since
$F:=\xi_k\left(\mathcal{F}\right)\in H_{2-k}^{+}$, it has a Fourier
expansion of
the form
\begin{equation*}\label{f.e.}
F(z)=
\sum_{n\geq 0}\widetilde{a}(n)q^n+\sum_{\substack{n \gg -\infty\\ n
\not=0}}\widetilde{b}(n)\Gamma(k-1, 4\pi
ny)q^{-n}
\end{equation*}
for some $\widetilde{a}(n), \widetilde{b}(n)\in \C$ and $\Gamma(s,y)$ the incomplete gamma function (see, for instance, \cite{BF}).
 The first summand is called the
\textit{holomorphic part} and  the
second the \textit{non-holomorphic part} of $F$, and we denote
them by $F^{+}$ and   $F^{-}$, respectively.
 A direct calculation   implies that for some
$a(n), b(n), c(n), d(0) \in \C$
\begin{equation}\label{superexp}
\mathcal{F}(z)=\sum_{n\gg -\infty}a(n)q^n+\sum_{n>0}
b(n)\Gamma(1-k, 4\pi ny) q^{-n}
+\sum_{\substack{ n \gg - \infty \\ n \not=0 }}
c(n)\mathbf{\Gamma}_{k-1}(4\pi ny)q^n
+ d(0)y^{1-k},
\end{equation}
where for $y>0$, we define
\begin{equation*}\label{Maasssplit}
\mathbf{\Gamma}_s(y):=\int_y^\infty\Gamma(s, t) t^{-s}e^t \frac{dt}{t}.
\end{equation*}
Similarly for $y<0$, we integrate from $-\infty$ instead of $\infty$.
 We call the first summand of the right hand side of \eqref{superexp} the
\textit{holomorphic part}, the
second the  \textit{harmonic part}, and the third the
\textit{non-harmonic part} of $\mathcal{F}$ and we  denote
them by $\mathcal{F}^{++}$, $\mathcal{F}^{+-}$, and $\mathcal{F}^{--}$ respectively.
We note that for $\mathcal{F}^{++} \not= 0, \mathcal{F}^{+-} \not= 0,$ and
$\mathcal{F}^{--} \not= 0$, we have
\begin{equation} \label{vanishxi}
\xi_{k}\left(\mathcal{F}^{++}\right)=0, \quad \xi_{k}\left(\mathcal{F}^{+-}\right)\not=0 \quad \xi_{k}\left(\mathcal{F}^{--}\right)\not=0,
\quad
  \xi_k\left(y^{1-k}\right) \ne 0
\text{,}
\end{equation}
\begin{equation}\label{vanishxixi}
\xi_{2-k}\circ \xi_{k}\left(\mathcal{F}^{+-}\right)=0, \quad \xi_{2-k}\circ \xi_{k}\left(\mathcal{F}^{--}\right)\not=0,
\quad
  \xi_{2 - k} \circ \xi_k \left(y^{1-k}\right) = 0
\text{,}
\end{equation}
\begin{equation}\label{vanishDxi}
D^{k-1}\circ \xi_{k}\left(\mathcal{F}^{+-}\right)\not=0, \quad
D^{k-1}\circ \xi_{k}\left(\mathcal{F}^{--}\right)=0, \quad
D^{k-1}\circ \xi_{k}\left(y^{1-k}\right)=0
.
\end{equation}

With this terminology and notation we have
\begin{theorem}\label{periodPoincare}
For $f \in S_k$, there is a $\mathcal{M}_{f, 2} \in
H_{k, 2}^{+}$ such that
$D^{k-1}\circ \xi_k\left(\mathcal{M}_{f, 2}\right)=
-
\frac{(k-2)!}{(4\pi)^{k-1}} f^c$ and
\[
\widehat{r}_{f, 2}(z)=
\mathcal{M}_{f, 2}^{+-}(z)\Big|_k(S-1).
\]
\end{theorem}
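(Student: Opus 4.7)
The plan is to construct $\mathcal{M}_{f,2}$ explicitly via the Poincar\'e series $\field{P}_{k,2}(-m;z)$ of Theorem \ref{PoinTheorem} and then to compare its harmonic part with the integral $F_{f,2}$ from Section \ref{periods}; once these match modulo an $S$-invariant correction, the identity follows at once from Proposition \ref{SuperM}, which already gives $F_{f,2}|_k(S-1) = \widehat r_{f,2}$.

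Writing $f^c = \sum_m c_m P_k(m;z)$ as a finite linear combination of classical cuspidal Poincar\'e series (which span $S_k$), I would set
\[
\mathcal{M}_{f,2}(z) := \frac{(k-2)!}{(k-1)!\,(4\pi)^{2(k-1)}}\sum_m c_m\,\field{P}_{k,2}(-m;z).
\]
Theorem \ref{PoinTheorem} together with equation \eqref{imageD} then immediately give $\mathcal{M}_{f,2}\in H_{k,2}^{+}$ and $D^{k-1}\circ\xi_k(\mathcal{M}_{f,2}) = -\frac{(k-2)!}{(4\pi)^{k-1}}f^c$, as the theorem requires. The construction mirrors the Poincar\'e-series approach used in \cite{BGKO} to establish Theorem \ref{periodPoincareintro1}.

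For the identity, I would first compute the Fourier expansion of $F_{f,2}$ at $\infty$. Inserting the $q$-series of $F_f$ and evaluating each integral via the substitution $w+z = it$ together with the integral representation of the incomplete gamma function yields
\[
F_{f,2}(z) = (k-2)!\sum_{n\geq 1} a(n)\,\Gamma(1-k,4\pi n y)\,q^{-n},
\]
which has precisely the shape of the harmonic component $\mathcal{F}^{+-}$ in the decomposition \eqref{superexp}. Expanding $\mathcal{M}_{f,2}$ analogously and separating the identity coset from the non-identity cosets of the underlying Poincar\'e sum, the former reproduces $F_{f,2}$ (after the chosen normalisation) while the latter contributes to the remaining three components of \eqref{superexp} along with an $S$-invariant correction to $\mathcal{M}_{f,2}^{+-}$. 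Hence $\mathcal{M}_{f,2}^{+-} = F_{f,2} + \Phi$ with $\Phi|_k(S-1) = 0$, and Proposition \ref{SuperM} delivers
\[
\mathcal{M}_{f,2}^{+-}\big|_k(S-1) = F_{f,2}\big|_k(S-1) = \widehat r_{f,2}.
\]

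The principal obstacle is the Fourier analysis of the non-identity cosets in $\field{P}_{k,2}(-m;z)$: one must verify that the coset sums do not introduce spurious $\Gamma(1-k,4\pi n y)q^{-n}$-terms in $\mathcal{M}_{f,2}^{+-}$ beyond those assembling into an $S$-invariant piece. This rests on the crucial fact that $\psi_m$ is defined as the $s$-derivative of $\mathcal{M}_s^k(4\pi my)e(mx)$ at $s = k/2$: under unfolding this produces $\mathbf{\Gamma}_{k-1}$-type Fourier terms characteristic of $\mathcal{M}^{--}$ rather than additional $\Gamma(1-k,\cdot)$-terms, and standard Kloosterman-sum bookkeeping then handles the remainder in the spirit of the proofs of Theorems \ref{periodPoincareintro1} and \ref{PoinTheorem}.
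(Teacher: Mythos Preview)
Your construction of $\mathcal{M}_{f,2}$ via the Poincar\'e series and the verification that $D^{k-1}\circ\xi_k(\mathcal{M}_{f,2})=-\tfrac{(k-2)!}{(4\pi)^{k-1}}f^c$ are fine, and your Fourier expansion of $F_{f,2}$ is correct and is exactly the right ingredient. The gap is in the coset analysis. The identity coset contributes $\psi_{-m}(z)=\tfrac{d}{ds}\bigl[\mathcal{M}^k_s(-4\pi m y)\bigr]_{s=k/2}e(-mx)$, which is \emph{not} of the form $\Gamma(1-k,4\pi m y)q^{-m}$; when decomposed according to \eqref{superexp} it contributes to several of the four pieces, so ``the identity coset reproduces $F_{f,2}$'' is not true as stated. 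More seriously, the non-identity cosets contribute to every Fourier coefficient, including genuine $\Gamma(1-k,4\pi n y)q^{-n}$ terms, and there is no mechanism that forces these extra $\mathcal{F}^{+-}$-contributions to assemble into a function $\Phi$ with $\Phi|_k(S-1)=0$. ``Kloosterman-sum bookkeeping'' does not produce $S$-invariance; nothing in the construction privileges $S$.

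The paper bypasses all Fourier analysis of $\field{P}_{k,2}$ (and says so explicitly just before Theorem \ref{PoinTheorem}). The missing idea is to use the differential operators as a detector: one checks directly that $\xi_k(F_{f,2})=(2i)^{1-k}F_{f^c}$ and hence $D^{k-1}\circ\xi_k(F_{f,2})=-\tfrac{(k-2)!}{(4\pi)^{k-1}}f^c$, so $D^{k-1}\circ\xi_k\bigl(F_{f,2}-\mathcal{M}_{f,2}\bigr)=0$. By \eqref{vanishxi}--\eqref{vanishDxi}, the operator $D^{k-1}\circ\xi_k$ kills $\mathcal{F}^{++}$, $\mathcal{F}^{--}$ and $y^{1-k}$ but is \emph{injective} on expansions of $\mathcal{F}^{+-}$-type (indeed $D^{k-1}\circ\xi_k$ sends $\sum_n b(n)\Gamma(1-k,4\pi n y)q^{-n}$ to $-(4\pi)^{1-k}\sum_n b(n)q^n$). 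Since $F_{f,2}$ is purely of $\mathcal{F}^{+-}$-type by your own computation, the difference $F_{f,2}-\mathcal{M}_{f,2}^{+-}$ is too, and lying in the kernel forces it to vanish. Thus $\mathcal{M}_{f,2}^{+-}=F_{f,2}$ \emph{exactly}---no correction term $\Phi$ is needed---and Proposition \ref{SuperM} finishes.
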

\begin{proof}
By equation \eqref{SuperMformula},
\[
\widehat{r}_{f, 2}= F_{f, 2}\Big|_k (S-1).
\]
By Corollary \ref{surjective}, there is a $\mathcal{M}_{f, 2}\in H_{k,
2}^{+}$ such that
\begin{equation} \label{Dx}
D^{k-1}\circ \xi_k\left( \mathcal{M}_{f, 2}\right)=
-
\frac{(k-2)!}{(4\pi)^{k-1}}
f^c.
\end{equation}
We claim that
\[
F_{f, 2}= \mathcal{M}_{f, 2}^{+, -}.
\]
A direct computation inserting the Fourier expansion of $f$ gives that
$F_{f, 2}(z)$ has a Fourier expansion of the
form
\[
\sum_n b(n)\Gamma(1-k, 4\pi ny) q^{-n}.
\]
Next
\begin{eqnarray*}
\xi_k\left(F_{f, 2}(z)\right)
=(2i)^{1-k}F^c_f(z)
&=&(2i)^{1-k}\int_{-\overline{z}}^{i\infty}\overline{f(w)}(z+\overline{w})^{k-2}\
d\overline{w} \\
&=&-(2i)^{1-k}\int_z^{i\infty}f^c(w)(z-w)^{k-2}
dw.
\end{eqnarray*}
 This implies that
\[
D^{k-1}\circ \xi_k\Big(F_{f, 2}\Big)=
-
\frac{(k-2)!}{(4\pi)^{k-1}}f^c.
\]
Thus by (\ref{Dx}),
\begin{equation*}\label{anni}
D^{k-1}\circ
\xi_k\Big(F_{f, 2}-\mathcal{M}_{f, 2}\Big)=0.
\end{equation*}
By (\ref{vanishxi}) and (\ref{vanishDxi}), non-zero expansions in
incomplete gamma functions are not in the kernel of $D^{k-1}\circ
\xi_k$. This implies that
$F_{f,2}-\mathcal{M}_{f, 2}^{+-}$=0.
\end{proof}
\section{A Mock Eichler-Shimura isomorphism}
\label{ESchar}
In this section, we will show an Eichler-Shimura type theorem for
harmonic period functions of positive weight.
We first note that
\begin{equation} \label{xi}
\xi_k(W_{k, 2}) \subset
W_{k-2}\text{,}
\end{equation}
because $\xi_k$ is compatible with the group action of $\Gamma$.

Fix $P \in W_{k, 2}.$ Then \eqref{xi}  and Theorem \ref{ES}
imply that
there exist $f, g \in S_k$ and $a  \in \C$ such that
\begin{equation} \label{eseq}
\xi_{k} (P(z))=r_f(z)+r_{g}(-z)+a\left(z^{k-2}-1\right).
\end{equation}
This can be viewed as a differential equation for $P$, and we will
now  describe the general solution in $W_{k,2}$.
 To find a preimage of the second summand we require regularized
integrals as they are defined, for instance, by Fricke in his upcoming PhD
thesis \cite{Fr}.

Consider a function $f:\field{H}\to\C$ that is continuous.  Assume that there is a $c\in\R^+$ such that
\begin{equation}
f(z)=O\Big(e^{c \, \mathrm{Im}(z)}\Big)
\label{bound}
\end{equation}
uniformly in $\mathrm{Re}(z)$ as  $\text{Im}(z)\to\infty$. Then, for
each $z_0 \in \mathfrak H$, the integral
\[
\int_{z_0}^{i\infty} e^{uw} f(w) \; dw
\]
(where the path of integration lies within a  vertical strip) is
convergent
for $u \in \C$ with $\mathrm{Im}(u) \gg 0$.  If it
has an analytic continuation to $u=0$, we define the {\it regularized
integral}
\[
R.\int_{z_0}^{i\infty}f(w) \; dw:=
\left[\int_{z_0}^{i\infty}e^{uw}f(w) \;dw\right]_{u=0}
\text{,}
\]
where the right hand side means the value at $u=0$ of the analytic continuation
of the integral.
Similarly, we define integrals at other cusps $\mathfrak a$.
Specifically,
suppose that $\mathfrak a=\sigma_{\mathfrak a}(i \infty)$ for a scaling
matrix $\sigma_{\mathfrak a} \in$ SL$_2(\Z)$.  If
$f(\sigma_{\mathfrak a} z)$ satisfies \eqref{bound}, then we define
\[
  R.\int_{z_0}^{\mathfrak a}f(w) \; dw
:=
  R.\int_{\sigma_{\mathfrak a}^{-1} z_0}^{i\infty}f \big|_2\gamma(w) \; dw.
\]
For cusps $\ca, \cb$ we define:
\begin{equation}
\label{reg}
  R.\int_{\ca}^{\cb}f(w) \; dw
:=
  R.\int_{z_0}^{\cb}f(w) \; dw + R.\int_{\ca}^{z_0}f(w) \; dw
\end{equation}
for any $z_0 \in \mathfrak H.$ An easy calculation shows: 
\begin{lemma}
\label{indep}
The integral $R.\int_{\ca}^{\cb}f(w) \,dw$ as defined in \eqref{reg} is
independent of $z_0 \in \mathfrak H.$
\end{lemma}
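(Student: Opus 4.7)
The plan is to reduce the independence to the basic identity that the difference of two regularized integrals sharing a common cusp endpoint equals the ordinary integral between the finite endpoints, and then observe that the two such differences appearing in \eqref{reg} cancel.

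Concretely, I would first establish, for any cusp $\cb=\sigma_\cb(i\infty)$ and any $z_0,z_1\in\mathfrak H$, the identity
\[
  R.\int_{z_0}^{\cb}f(w)\,dw - R.\int_{z_1}^{\cb}f(w)\,dw
  = \int_{z_0}^{z_1}f(w)\,dw,
\]
where the right-hand side is an ordinary integral along any path in $\mathfrak H$ from $z_0$ to $z_1$. To see this, pass to the cusp at $i\infty$ using the defining transformation $R.\int_{z_j}^{\cb}f\,dw = R.\int_{\sigma_\cb^{-1}z_j}^{i\infty}f|_2\sigma_\cb\,dw$, and for $\mathrm{Im}(u)\gg 0$ split
\[
  \int_{\sigma_\cb^{-1}z_0}^{i\infty} e^{uw}(f|_2\sigma_\cb)(w)\,dw
  =
  \int_{\sigma_\cb^{-1}z_0}^{\sigma_\cb^{-1}z_1} e^{uw}(f|_2\sigma_\cb)(w)\,dw
  +\int_{\sigma_\cb^{-1}z_1}^{i\infty} e^{uw}(f|_2\sigma_\cb)(w)\,dw.
\]
The first summand is entire in $u$ (its integrand is continuous on a compact path), so its analytic continuation to $u=0$ is simply obtained by setting $u=0$; the standard change of variables under $|_2\sigma_\cb$ then turns it into $\int_{z_0}^{z_1}f(w)\,dw$. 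The last summand, evaluated at $u=0$, is $R.\int_{z_1}^{\cb}f\,dw$ by definition, proving the claimed identity.

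By the same argument applied at the cusp $\ca$ (with a sign from reversing orientation), one obtains
\[
  R.\int_{\ca}^{z_0}f(w)\,dw - R.\int_{\ca}^{z_1}f(w)\,dw
  = -\int_{z_0}^{z_1}f(w)\,dw.
\]
Adding the two displayed identities yields
\[
  \Bigl(R.\int_{z_0}^{\cb}f\,dw + R.\int_{\ca}^{z_0}f\,dw\Bigr)
  -
  \Bigl(R.\int_{z_1}^{\cb}f\,dw + R.\int_{\ca}^{z_1}f\,dw\Bigr)
  = 0,
\]
which is exactly the independence statement. The only step requiring any care is the interchange of analytic continuation with the splitting of the integral; but since the compact-path piece is entire in $u$, this interchange is automatic, so I do not expect a substantive obstacle—consistent with the authors' description of the computation as easy.
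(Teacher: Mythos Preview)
Your argument is correct and is exactly the ``easy calculation'' the paper alludes to but omits: split off the entire-in-$u$ compact-path piece, identify it via the change of variables $v=\sigma_{\cb}w$ (using $d(\sigma_{\cb}w)=(cw+d)^{-2}\,dw$, which is precisely the weight-$2$ slash factor) with the ordinary integral $\int_{z_0}^{z_1}f$, and cancel against the analogous piece from the $\ca$ side. There is nothing further to compare, since the paper gives no details.
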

By Theorem \ref{periodPoincareintro1}, there exists a harmonic Maass
form $M_f$ such that
\begin{equation}\label{bgko}
r_f(-z)=M^+_f\Big|_k(1-S)(z).
\end{equation}
Set
\begin{align*}
  \mathcal{F}_{f, 2}^\ast(z)
&:=
  R.\int_{-\overline{z}}^{i\infty}\frac{M^+_f(w)}{(w+z)^k} \; dw
\text{,}
\\[4pt]
  r_{f, 2}^\ast(z)
&:=
  R. \int_0^{i\infty}\frac{M^+_f(w)}{(w+z)^k} \; dw \, \Big|_k S
\text{,}
\\[4pt]
  \widetilde{r}_{f, 2}^\ast(z)
&:=
  \int_{-\overline{z}}^{i\infty}\frac{r_f(-w)}{(w+z)^k} \; dw
\text{,}
\\[4pt]
  \widehat{r}_{f, 2}^*(z)
&:=
  r_{f, 2}^\ast (z)-\widetilde{r}_{f, 2}^\ast(z)
\text{.}
\end{align*}
We note that, by definition,
$$
M^+_f(z)=\sum_{n=N}^{0}a_n e^{2 \pi i n
z}+O\left(e^{-2 \pi y}\right)
 \quad \text{for some $N<0$, as $y \to \infty$}.
 $$
We insert the above Fourier expansion into $\mathcal{F}_{f,2}^\ast$ and integrate each of the terms separately.  Terms with $n \ge 0$ do not require regularization.  For terms with $n < 0$ we obtain a
linear combination  of incomplete gamma functions of the form $\Gamma(\ell, z)$ ($\ell \in\Z$, $z \not=0$).
These functions can be analytically continued,
from which we may  deduce that
the integrals  can be extended to $u=0$. Therefore, the
regularized integrals are well-defined.  The integral $r_{f, 2}^\ast$ ist treated analogously.

We also  note that
$\widetilde{r}_{f, 2}^\ast$
does not require  regularization, since $r_f(-z)\in V_{k-2}$.
We easily compute, using (\ref{conj}), that
\begin{equation}\label{rstar}
\xi_k\left(\widehat{r}_{f, 2}^*(z)\right)=(2i)^{1-k}r_{f^c}(-z).
\end{equation}

We claim that a special solution in $W_{k, 2}$ to (\ref{xi}) is then
given by
\begin{equation}\label{ssol}
R_{f, 2}^*(z)
:= -(2i)^{k-1} \widehat
r_{f^c, 2}(z) -
(2i)^{k-1} \widehat r^*_{g^c, 2}(z)+
\overline a(2i)^{k-1}
\left ( \int_{-\overline z}^{i \infty} \frac{dw}{(w+z)^k}\right) \Big
|_k(1-S).
\end{equation}
It is clear by \eqref{W_k,2}, \eqref{rstar} and the identity
\begin{equation}\label{third}
\xi_k\left ( \int_{-\bar z}^{i \infty} \frac{dw}{(w+z)^k}
\right)=(2i)^{1-k}
\end{equation}
that $R_{f, 2}^*$ satisfies \eqref{eseq}.

By Theorem \ref{W_k,2}, the function
$\widehat r_{f^c, 2}$ is an element of $W_{k, 2}$.
The same is true for $\widehat{r}_{f, 2}^*$:
 \begin{lemma}
\label{per*}
We have
\[
\mathcal{F}_{f, 2}^\ast\Big|_k(S-1)(z)=\widehat{r}_{f, 2}^*(z).
\]
In particular, $\widehat{r}_{f, 2}^*\in W_{k,2}$.
\end{lemma}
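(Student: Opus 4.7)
My plan is to follow the pattern of the proof of Proposition \ref{SuperM}, replacing $f$ by $M_f^+$ and exploiting \eqref{bgko} in the form $M_f^+(-1/u) = u^{2-k}\bigl(M_f^+(u) - r_f(-u)\bigr)$ to absorb the cocycle $r_f$ into the computation. Starting from $\mathcal{F}_{f, 2}^\ast|_k S(z) = z^{-k} R.\int_{1/\overline{z}}^{i\infty} \frac{M_f^+(w)}{(w-1/z)^k}\,dw$ and making the substitution $w = -1/u$, the factors $u^{k-2}$ (from the Jacobian together with $(w-1/z)^k$) and $u^{2-k}$ (from the automorphy relation) cancel to yield
\[
\mathcal{F}_{f, 2}^\ast\Big|_k S(z) = R.\int_{-\overline{z}}^0 \frac{M_f^+(u) - r_f(-u)}{(u+z)^k}\,du.
\]
Since $r_f(-u)$ is a polynomial, its contribution requires no regularization and can be isolated. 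Splitting the remaining regularized integral as $R.\int_{-\overline{z}}^0 = R.\int_{-\overline{z}}^{i\infty} - R.\int_0^{i\infty}$, justified by the additivity built into \eqref{reg} and by Lemma \ref{indep}, produces
\[
\mathcal{F}_{f, 2}^\ast\Big|_k(S-1)(z) = -R.\int_0^{i\infty}\frac{M_f^+(u)}{(u+z)^k}\,du - \int_{-\overline{z}}^0\frac{r_f(-u)}{(u+z)^k}\,du.
\]

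Next I would apply the same change of variables to $r_{f, 2}^\ast$ itself. Writing $r_{f, 2}^\ast = G\big|_k S$ with $G(z) := R.\int_0^{i\infty}\frac{M_f^+(w)}{(w+z)^k}\,dw$ and carrying out $w = -1/u$ in $G(-1/z)$ (now the cusp $0$ is the source and $i\infty$ the target, so the orientation flips) gives
\[
r_{f, 2}^\ast(z) = -R.\int_0^{i\infty}\frac{M_f^+(u)}{(u+z)^k}\,du + \int_0^{i\infty}\frac{r_f(-u)}{(u+z)^k}\,du.
\]
Combining this with $\widetilde{r}_{f, 2}^\ast(z) = \int_{-\overline{z}}^{i\infty}\frac{r_f(-u)}{(u+z)^k}\,du$ via the elementary identity $\int_0^{i\infty} - \int_{-\overline{z}}^{i\infty} = -\int_{-\overline{z}}^0$ reproduces exactly the expression obtained for $\mathcal{F}_{f, 2}^\ast|_k(S-1)(z)$, proving the identity $\mathcal{F}_{f, 2}^\ast|_k(S-1) = \widehat{r}_{f, 2}^*$.

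For the $W_{k,2}$-assertion I would verify the three defining conditions in turn. The $\xi_k$-condition is already recorded in \eqref{rstar}. For the $(1+S)$-relation, the identity just established gives $\widehat{r}_{f, 2}^*|_k(1+S) = \mathcal{F}_{f, 2}^\ast|_k(S-1)(1+S) = \mathcal{F}_{f, 2}^\ast|_k(S^2 - 1)$, which vanishes since $S^2 = -I$ acts trivially in even weight. For the $(1+U+U^2)$-relation I first note $\mathcal{F}_{f, 2}^\ast|_k T = \mathcal{F}_{f, 2}^\ast$, which follows from $M_f^+(w+1) = M_f^+(w)$ together with the shift $w \mapsto w-1$; the regulator $e^{tw}$ only acquires the factor $e^{-t}$, which equals $1$ at $t=0$. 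Since $U = TS$ this yields $\mathcal{F}_{f, 2}^\ast|_k(S-1) = -\mathcal{F}_{f, 2}^\ast|_k(1-U)$, and the conclusion follows from $(1-U)(1+U+U^2) = 1 - U^3 = 0$.

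The genuine technical obstacle throughout is bookkeeping for the regulator $e^{tw}$ under the change of variables $w \mapsto -1/u$ and under splitting the integration path through the cusp $0$. For the M\"obius substitution the required compatibility is essentially the one used to define regularized integrals at the cusp $0$ via scaling matrices in \eqref{reg}; for translations the regulator only picks up a multiplicative constant that is neutral at $t=0$. Once these technical checks are carried out, both the identity and the three Eichler-Shimura type relations follow by formal manipulation.
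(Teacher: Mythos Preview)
Your proof is correct and follows essentially the same route as the paper: both arguments apply the substitution $w\mapsto -1/w$ to the regularized integrals, invoke \eqref{bgko} to turn $M_f^+(-1/w)w^{k-2}$ into $M_f^+(w)-r_f(-w)$, and split the resulting cusp-to-cusp integral via \eqref{reg} and Lemma~\ref{indep}. The only organizational difference is that the paper first expresses $r_{f,2}^*|_kS$ and then shows $\mathcal F_{f,2}^*|_k(S-1)=-\widehat r_{f,2}^*|_kS$ (whence the result by applying $S$), whereas you compute each side directly and match them; your explicit verification of the three $W_{k,2}$ conditions spells out what the paper leaves implicit in the phrase ``In particular''.
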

\begin{proof}
We first note, with Lemma \ref{indep} and the definition of
regularized integrals, that
\begin{align}
\nonumber
  r_{f, 2}^\ast|_kS
&=
  \left [
  \int_{-\bar z}^{i \infty} \frac{e^{wu}M^+_f(w) \; dw}
                            {(w+z)^k}
  \right ]_{u =0}
  -
  \left [
  \int_{1/\bar z}^{i \infty} \frac{e^{wu} M^+_f(-1/w) \;d(-1/w)}
                             {(-1/w+z)^k}
  \right ]_{u=0}
\\[4pt]
&=
  \left [
  \int_{-\bar z}^{i \infty} \frac{e^{wu} M^+_f(w) \; dw}
                            {(w+z)^k}
  \right ]_{u=0}
  -
  \left [\int_{-\bar z}^{0} \frac{e^{-u/w} M^+_f(w) \; dw}
                              {(w+z)^k}
  \right ]_{u=0}
\text{.}
\label{r}
\end{align}
On the other hand, to compute
$\mathcal{F}_{f, 2}^\ast |_k(S-1)(z)=\mathcal F_{f,2}^*(-1/z)z^{-k}-\mathcal F_{f, 2}^*(z)$
we recall that, by definition, this is the value of $u$ at $0$ of the analytic continuation of
\begin{gather*}
  \int_{1/\bar z}^{i \infty} \frac{e^{wu}M^+_f(w) \; dw}
                                  {(wz-1)^k}
  -
  \int_{-\bar z}^{i \infty} \frac{e^{w u} M^+_f(w) \; dw}
                                 {(w+z)^k}
\text{.}
\end{gather*}
For $\mathrm{Im}(u) \gg 0$, with \eqref{bgko} this equals
\begin{multline}
  \int_{-\bar z}^{0} \frac{e^{-u/w}M^+_f(-1/w) \; d(-1/w)}
                       {(-z/w-1)^k}
  -
  \int_{-\bar z}^{i \infty} \frac{e^{wu} M^+_f(w) \; dw}
                            {(w+z)^k}
\\=
  \int_{-\bar z}^{0} \frac{e^{-u/w}M^+_f(w) \; dw}
                       {(z+w)^k}
  -
  \int_{-\bar z}^{0} \frac{e^{-u/w}r_f(-w) \; dw}
                       {(z+w)^k}
  -
  \int_{-\bar z}^{i \infty} \frac{e^{wu} M^+_f(w) \; dw}
                            {(w+z)^k }
\text{.}
\label{compu}
\end{multline}
Because of \eqref{periodrel}, the second integral of \eqref{compu} equals
$$\int_{-1/\bar z}^{i \infty} \frac{e^{wu}r_f(-1/w) w^k \; dw}{(zw-1)^k}
=-\int_{1/\bar z}^{i \infty}
\frac{e^{wu}r_f(w) \; dw}{(zw-1)^k}
$$
This is analytic at $u=0$ with value $\tilde r^*_{f,2}|_kS(z)$. Therefore,
with analytic continuation
and \eqref{r}, \eqref{compu} gives
\begin{gather*}
  \mathcal F_{f, 2}^*|_k(S-1)
=
  -r_{f, 2}^*|_k S + \widetilde{r}_{f, 2}^* \big|_k S
=
  -\widehat{r}_{f, 2}^* \big|_k S
\text{,}
\end{gather*}
which implies the result.
\end{proof}
That the  third term of \eqref{ssol} is an element of $W_{k, 2}$
follows directly  from \eqref{third} and the invariance of the integral
under $T$.

Therefore, the general solution of \eqref{eseq} is
\begin{equation*}\label{gensolution}
-(2i)^{k-1} \left
(\widehat
r_{f^c, 2}(z) + \widehat r^*_{g^c, 2}(z)-\overline a
 \int_{-\overline z}^{i \infty} \frac{dw}{(w+z)^k} \Big
|_k(1-S)+G(z) \right),
\end{equation*}
where $G$ is a holomorphic function on $\mathfrak H$.
The last summand $G$ must be annihilated by $1+S$ and $1+U+U^2$ in terms of $|_k$, because all the others satisfy the Eichler-Shimura relations. This implies
that $G=H|_k(S-1)$  for some  translation invariant holomorphic
function $H$. Indeed, this follows from $H^1(\G, \mathcal A)=0$, where
$\mathcal A$ is a the module
of holomorphic functions on $\mathfrak H$
(see equation (5.3) of \cite{Kn}  citing \cite{Kr}).

Set
$$
U_{k, 2}:=\Big (
\mathcal O(\mathfrak H)+\left\{f \in \oplus_{j=1}^{k-1}y^{-j}V_{k-2};\,
\xi_k(f) \in V_{k-2} \right\}
\Big ) \cap \{f: \HH \to \C; f|_kT=f\},
$$
where $\mathcal O(\mathfrak H)$ is the space of holomorphic functions
on $\mathfrak H$.
We can then complete the proof of
\begin{theorem} The map $\phi: S_k \oplus S_k \to W_{k, 2}$
defined by
$$
\phi(f,g):=\widehat r_{f^c, 2}+\widehat r^*_{g^c, 2}
$$
induces an isomorphism
$$
\overline{\phi}:\,  S_k \oplus S_k \cong_{\mathbb R} W_{k, 2}/V_{k, 2},
$$
where $V_{k,   2}:=U_{ k, 2}|_k(S-1)$.
\end{theorem}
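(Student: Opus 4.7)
The plan is to combine the general-solution analysis from the paragraphs immediately preceding the theorem (which handles surjectivity) with a short injectivity argument that applies $\xi_k$ and invokes the classical Eichler-Shimura isomorphism. Well-definedness of $\phi$ is immediate: Theorem \ref{W_k,2} gives $\widehat r_{f^c,2}\in W_{k,2}$ and Lemma \ref{per*} gives $\widehat r^*_{g^c,2}\in W_{k,2}$. The map is $\R$-linear---in fact $\C$-antilinear---because $f\mapsto f^c$ is antilinear while $\widehat r_{\cdot,2}$ and $\widehat r^*_{\cdot,2}$ are $\C$-linear in their cuspidal inputs; this antilinearity is what forces the isomorphism to be asserted only over $\R$.

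For surjectivity of $\overline\phi$, I would follow the discussion laid out just above the theorem statement. Given $P\in W_{k,2}$, the containment \eqref{xi} places $\xi_k(P)\in W_{k-2}$, and Theorem \ref{ES} provides unique $(f,g,a)\in S_k\oplus S_k\oplus\C$ with
\[
\xi_k(P)=r_f(z)+r_g(-z)+a\bigl(z^{k-2}-1\bigr).
\]
The function $R^*_{f,2}$ in \eqref{ssol} satisfies $\xi_k(R^*_{f,2})=\xi_k(P)$, so $P-R^*_{f,2}$ is holomorphic and obeys both Eichler-Shimura relations. The vanishing $H^1(\G,\mathcal O(\HH))=0$ cited from \cite{Kn,Kr} then expresses this difference as $H|_k(S-1)$ with $H$ translation invariant and holomorphic, hence in $V_{k,2}$. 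The correction summand $\overline a\,(2i)^{k-1}\int_{-\bar z}^{i\infty}(w+z)^{-k}\,dw\bigl|_k(1-S)$ also lies in $V_{k,2}$, since a direct computation gives $\int_{-\bar z}^{i\infty}(w+z)^{-k}\,dw=(2iy)^{1-k}/(k-1)\in U_{k,2}$. Therefore $P\equiv -(2i)^{k-1}\phi(f,g)\pmod{V_{k,2}}$, and the scalar is absorbed through the $\C$-antilinearity of $\phi$, giving $P\equiv\overline\phi\bigl((2i)^{k-1}f,(2i)^{k-1}g\bigr)$.

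For injectivity, assume $\overline\phi(f,g)=0$, that is $\widehat r_{f^c,2}+\widehat r^*_{g^c,2}\in V_{k,2}$. Applying $\xi_k$ and using \eqref{*} together with \eqref{rstar} yields $(2i)^{1-k}(r_f(z)+r_g(-z))\in\xi_k(V_{k,2})$. Since $\xi_k$ intertwines $|_k$ with $|_{2-k}$, one has $\xi_k(V_{k,2})=\xi_k(U_{k,2})|_{2-k}(S-1)$. Now any element of $U_{k,2}$ has $\xi_k$-image that is simultaneously a polynomial of degree at most $k-2$ and translation invariant, hence a constant; thus $\xi_k(U_{k,2})\subseteq\C$ and consequently $\xi_k(V_{k,2})\subseteq\C(z^{k-2}-1)$. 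The resulting relation $r_f+r_g(-\cdot)\in\C(z^{k-2}-1)$, combined with the uniqueness assertion of Theorem \ref{ES}, then forces $f=g=0$. The main technical point I anticipate needing care is precisely this computation of $\xi_k(V_{k,2})$: it must be small enough (only one-dimensional) so that the classical Eichler-Shimura rigidity is not disturbed, a fact that ultimately relies on the translation invariance built into the definition of $U_{k,2}$ together with the constraint $\xi_k(\cdot)\in V_{k-2}$.
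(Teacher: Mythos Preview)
Your proof is correct and follows essentially the same route as the paper. For surjectivity you invoke the general-solution discussion preceding the theorem exactly as the paper does, adding the verification that the $\overline a$-term lies in $V_{k,2}$ (via $\int_{-\bar z}^{i\infty}(w+z)^{-k}\,dw=(k-1)^{-1}(2iy)^{1-k}\in U_{k,2}$), a point the paper leaves implicit. For injectivity you apply $\xi_k$ and reduce to classical Eichler--Shimura just as the paper does; your computation $\xi_k(U_{k,2})\subseteq\C$ (forced by $T$-invariance together with $\xi_k(\cdot)\in V_{k-2}$) makes explicit what the paper compresses into the phrase ``$r_f(z)+r_g(-z)$ is an Eichler coboundary''.
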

\begin{proof} We have already shown above that
$\overline{\phi}$  is surjective.
To show that it is injective,
suppose that $P \in \mathrm{ker}(\overline{\phi})$.
Then
\begin{equation} \label{ESlike}
\widehat r_{f^c, 2} + \widehat r^*_{g^c, 2} = A|_k(S-1)
\end{equation}
for some $A \in U_{k, 2}.$
 Applying $\xi_k$ on both sides  of \eqref{ESlike},  we
deduce that $r_f(z)+r_g(-z)$ is an Eichler coboundary. The classical
Eichler-Shimura isomorphism  (Theorem \ref{ES}) \ implies that $f, g$
must vanish.
\end{proof}
\begin{remark}
 Since $\left\{f \in \oplus_{j=1}^{k-1}y^{-j}V_{k-2};\,
\xi_k(f) \in V_{k-2} \right\}$ does not contain any holomorphic elements, it is isomorphic to $V_{k -2}$.  The corresponding isomorphism is $\xi_k$.
\end{remark}

\section{Cohomological interpretation}\label{cohom}
Theorem \ref{periodPoincare} has a cohomological interpretation which
makes apparent the
similarity of our construction with the one associated to critical values
in \cite{BGKO}. We shall first give a cohomological interpretation of the
period polynomials in the context of the results of \cite{BGKO}.

We recall the definition of parabolic cohomology in our  setting.
For  $m \in \Z$ and  a $\G$\nobreakdash-\hspace{0pt}submodule
$V$ of the space of functions $f: \HH \to \C$ we  define
\begin{align*}
  Z^1_p(\G, V)
&:=
  \bigl\{g: \G\to V; g(\g\delta)=g(\g)|_m\delta+g(\delta)
  \text{ and}
\\&\hspace{7em}
  g(T)=h|_m(T-1) \text{ for some } h \in V \bigr\},
\end{align*}
\begin{align*}
  B^1_p(\G, V)=B^1(\G, V)
&:=
  \bigl\{g: \G\to V; \text{for some } h \in V,
\\&\hspace{7em}
  g(\g)=h|_m(\g-1) \text{ for all }\g \in \G \bigr\},
\end{align*}
and
$$
H^1_p(\G, V):=Z^1_p(\G, V)/B^1_p(\G, V).
$$
A basic map in the theory of period polynomials is
$$
\rho: S_k \to H^1_p(\G, V_{k-2}).
$$
It assigns to $f \in S_k$ the class of a cocycle $\phi_f$ determined
by $\phi_f(T)=0$ and $\phi_f(S)=r_f(-z)$.
 We further consider the $\G$-module
$\mathcal O^*(\mathfrak H)$ of holomorphic functions
$F: \mathfrak H \to \C$ of at most linear exponential
growth at the cusps.
The group $\G$ acts on $\mathcal O^*(\mathfrak H)$ via $|_{2-k}$.
Then the natural injection $i$ of $V_{k-2}$ into
$\mathcal O^*(\mathfrak H)$
induces a map
$$
i^*: H^1_p\left(\G, V_{k-2}\right) \to H^1_p\left(\G, \mathcal
O^*(\mathfrak H)\right).
$$
Theorem 1.1 of \cite{BGKO} states that $r_f(-z)$ is a constant
multiple of
$ F_f^+|_{2-k}(1-S)$ for the holomorphic part $F_f^+$ of
some harmonic Maass form $F_f$ that grows at most linear
exponentially at the cusps.
This can then be reformulated as:
\begin{equation}\label{ESBGKO}i^* \circ \rho=0.
\end{equation}

To
formulate the analogue of this result in our
context and the setting of non-critical values we consider the following
$\G$-modules, all in terms of the action $|_k$,
\begin{enumerate}
\item[i)] $\HHH^*(\HH)$ the $\G$-module of harmonic
functions on $\HH$ of at most linear exponential growth at the
cusps.
\item[ii)]
$\mathcal V_{k, 2}:=\{f: \HH \to \C
 \text{ of at most lin. exp. growth at the cusps,
}
\xi_k(f) \in V_{k-2} \}$.
\end{enumerate}
Because of
the compatibility of $\xi_{k}$ with the
slash action, these spaces are $\G$-invariant.

According to Theorem \ref{W_k,2}, for each $f \in S_k$, the map
$\psi_f$ such that $\psi_f(T)=0$ and $\psi_f(S)=
\widehat{r}_{f, 2}$
induces a cocycle with values in $\mathcal V_{k, 2}$.
Therefore, the assignment $f \to \psi_f$ induces a linear
map
$$
\rho': S_k \to H^1_p\left(\G, \mathcal V_{k, 2}\right).
$$
Because of Remark \ref{harmo}, there is a natural injection $i'$ from
$\mathcal V_{k, 2}$ to $\HHH^*(\HH)$, and this induces a map:
$$
i'^*:
H^1_p \left(\G, \mathcal V_{k, 2}\right) \to
H^1_p \left(\G, \HHH^*(\HH)\right).
$$

 Theorem \ref{periodPoincare} then implies that
\begin{theorem} \label{periodPoincare'} The composition
$i'^* \circ \rho'$ is the zero map.
\end{theorem}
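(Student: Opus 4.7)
The plan is to exhibit, for each $f\in S_k$, an explicit element of $\mathcal{H}^*(\mathfrak H)$ whose coboundary equals the cocycle $\psi_f$. The natural candidate, furnished by Theorem \ref{periodPoincare}, is the harmonic part $\mathcal{M}_{f,2}^{+-}$ of the sesquiharmonic Maass form $\mathcal{M}_{f,2}$ attached to $f$. Since $i'^*$ is induced by the inclusion $\mathcal V_{k,2}\hookrightarrow\mathcal H^*(\mathfrak H)$, proving $[\psi_f]=0$ in $H^1_p(\Gamma,\mathcal H^*(\mathfrak H))$ amounts to finding $h\in\mathcal H^*(\mathfrak H)$ with $\psi_f(\gamma)=h|_k(\gamma-1)$ for every $\gamma\in\Gamma$. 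Since $\Gamma=\mathrm{SL}_2(\Z)$ is generated by $S$ and $T$, and the cocycle relation propagates the equality from the generators to all of $\Gamma$, it suffices to check this on $S$ and $T$.

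First I would verify that $h:=\mathcal{M}_{f,2}^{+-}$ lies in $\mathcal{H}^*(\mathfrak H)$. Harmonicity is immediate from \eqref{vanishxixi}, which asserts $\xi_{2-k}\circ\xi_k(\mathcal{M}_{f,2}^{+-})=0$, i.e.\ $\Delta_k(\mathcal{M}_{f,2}^{+-})=0$. For the growth condition, I would inspect the Fourier expansion \eqref{superexp}: the harmonic part is $\sum_{n>0} b(n)\,\Gamma(1-k,4\pi n y)q^{-n}$, which is translation invariant term by term. The bound on $\mathcal{M}_{f,2}$ at the cusp $i\infty$ transfers to this expansion (here there is only one cusp), giving at most linear exponential growth. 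Translation invariance in particular yields $h|_k(T-1)=0=\psi_f(T)$.

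Next, Theorem \ref{periodPoincare} yields directly $h|_k(S-1)=\mathcal{M}_{f,2}^{+-}|_k(S-1)=\widehat r_{f,2}=\psi_f(S)$. Hence, setting $\phi(\gamma):=h|_k(\gamma-1)$ gives a coboundary in $B^1_p(\Gamma,\mathcal H^*(\mathfrak H))$ which agrees with $\psi_f$ on both generators and therefore on all of $\Gamma$. This establishes $i'^*\circ\rho'(f)=0$ for every $f\in S_k$, proving the theorem.

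The only nontrivial step is the verification that $\mathcal{M}_{f,2}^{+-}$ has at most linear exponential growth at the cusp and that its defining Fourier expansion is genuinely translation invariant; both reduce to standard but careful bookkeeping with the Fourier expansion \eqref{superexp}. All remaining steps are a direct application of Theorem \ref{periodPoincare} together with the definition of $\psi_f$.
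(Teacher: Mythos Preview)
Your proposal is correct and follows exactly the line the paper intends: the paper's own proof is the single sentence ``Theorem \ref{periodPoincare} then implies that'', and you have simply unpacked what this means---namely, that $h:=\mathcal{M}_{f,2}^{+-}\in\mathcal H^*(\mathfrak H)$ is translation invariant with $h|_k(S-1)=\widehat r_{f,2}$, so $\psi_f$ is a coboundary in $H^1_p(\Gamma,\mathcal H^*(\mathfrak H))$. One small remark: your phrase ``here there is only one cusp'' is slightly imprecise, since $\mathcal{M}_{f,2}^{+-}$ is not $\Gamma$-invariant and the growth condition must in principle be checked at every rational cusp; however, once you have $h|_k\gamma=h+\psi_f(\gamma)$ with $\psi_f(\gamma)\in\mathcal V_{k,2}$, the growth at all cusps reduces to the growth at $i\infty$, so your argument goes through.
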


\end{document}